\newtheorem{thm}{Theorem}[section]
\newtheorem{lem}{Lemma}[section]
\newtheorem{cor}{Corollary}[section]
\newtheorem{prop}{Proposition}[section]
\theoremstyle{definition}
\newtheorem{definition}{Definition}
\newtheorem{example}{Example}[section]
\theoremstyle{remark}
\newtheorem{rem}{Remark}[section]
\numberwithin{equation}{section}
\begin{document}

\newcommand{\thmref}[1]{Theorem~\ref{#1}}
\newcommand{\secref}[1]{Section~\ref{#1}}
\newcommand{\lemref}[1]{Lemma~\ref{#1}}
\newcommand{\propref}[1]{Proposition~\ref{#1}}
\newcommand{\corref}[1]{Corollary~\ref{#1}}
\newcommand{\remref}[1]{Remark~\ref{#1}}
\newcommand{\eqnref}[1]{(\ref{#1})}
\newcommand{\exref}[1]{Example~\ref{#1}}

\newcommand{\nc}{\newcommand}
\nc{\ZZ}{{\mathbb Z}} \nc{\C}{{\mathbb C}} \nc{\N}{{\mathbb N}}
\nc{\F}{{\mf F}} \nc{\Q}{\ol{Q}} \nc{\la}{\lambda}
\nc{\ep}{\epsilon} \nc{\h}{\mathfrak h}
\nc{\sh}{\overline{\mathfrak h}} \nc{\n}{\mf n} \nc{\A}{{\mf a}}
\nc{\G}{{\mathfrak g}} \nc{\SG}{\overline{\mathfrak g}}
\nc{\D}{\mc D} \nc{\Li}{{\mc L}} \nc{\La}{\Lambda}
\nc{\is}{{\mathbf i}} \nc{\V}{\mf V} \nc{\bi}{\bibitem}
\nc{\NS}{\mf N} \nc{\dt}{\mathord{\hbox{${\frac{d}{d t}}$}}}
\nc{\E}{\mc E} \nc{\ba}{\tilde{\pa}} \nc{\half}{\frac{1}{2}}
\nc{\mc}{\mathcal} \nc{\mf}{\mathfrak} \nc{\hf}{\frac{1}{2}}
\nc{\hgl}{\widehat{\mathfrak{gl}}} \nc{\gl}{{\mathfrak{gl}}}
\nc{\hz}{\hf+\ZZ} \nc{\vac}{|0 \rangle}
\nc{\dinfty}{{\infty\vert\infty}} \nc{\SLa}{\overline{\Lambda}}
\nc{\SF}{\overline{\mathfrak F}} \nc{\SP}{\overline{\mathcal P}}
\nc{\U}{\mathfrak u} \nc{\SU}{\overline{\mathfrak u}}
\nc{\ov}{\overline}

\advance\headheight by 2pt

\title[A BGG-type resolution for tensor $\gl(m|n)$-modules]{A BGG-type resolution for tensor modules over
general linear superalgebra}

\author[Shun-Jen Cheng]{Shun-Jen Cheng$^\dagger$}
\thanks{$^\dagger$Partially supported by an NSC-grant of the ROC and an Academia Sinica Investigator grant}
\address{Institute of Mathematics, Academia Sinica, Taipei,
Taiwan 11529} \email{chengsj@math.sinica.edu.tw}

\author[Jae-Hoon Kwon]{Jae-Hoon Kwon$^{\dagger\dagger}$}
\thanks{$^{\dagger\dagger}$Partially supported by KRF-grant 2005-070-C00004}
\address{Department of Mathematics, University of Seoul, 90, Cheonnong-Dong, Dongdaemun-gu, Seoul 130-743, Korea}
\email{jhkwon@uos.ac.kr}

\author[Ngau Lam]{Ngau Lam$^{\dagger\dagger\dagger}$}
\thanks{$^{\dagger\dagger\dagger}$Partially supported by an NSC-grant 96-2115-M-006-008-MY3 of the ROC}
\address{Department of Mathematics, National Cheng-Kung University, Tainan, Taiwan 70101}
\email{nlam@mail.ncku.edu.tw}

\begin{abstract} \vspace{.3cm}
We construct a Bernstein-Gelfand-Gelfand type resolution in terms
of direct sums of Kac modules for the finite-dimensional
irreducible tensor representations of the general linear
superalgebra. As a consequence it follows that the unique maximal
submodule of a corresponding reducible Kac module is generated by
its proper singular vector.

 \vspace{.3cm} \noindent{\bf Key words:}
Bernstein-Gelfand-Gelfand resolution, singular vector, Kac module,
general linear superalgebra.

\vspace{.3cm}
 \noindent{\bf Mathematics Subject Classifications (2000)}: 17B67.
\end{abstract}

\maketitle


\section{Introduction}

The classical result of Bernstein-Gelfand-Gelfand \cite{BGG}
resolves a finite-dimensional irreducible module over a
finite-dimensional semi-simple Lie algebra in terms of direct sums
of Verma modules. Such a resolution is sometimes called a strong BGG
resolution.  In \cite{Le, RC} it was shown that the
finite-dimensional simple modules may also be resolved in terms of
direct sums of generalized Verma modules.

While BGG resolutions have been known to exist for integrable
representations over Kac-Moody algebras (see e.g.~\cite{RCW, Ku}),
virtually nothing is known even for finite-dimensional simple Lie
superalgebras. However, what seems to be known to experts is that,
in general, the finite-dimensional simple modules over a
finite-dimensional simple Lie superalgebra cannot be resolved in
terms of Verma modules. For example, even the natural representation
of the Lie superalgebra $\mf{sl}(1|2)$ (or $\mf{gl}(1|2)$) cannot
have a resolution in terms of Verma modules (see \exref{example}).

It is therefore surprising that resolutions for a large class of
finite-dimensional representations of the general linear
superalgebra $\gl(m|n)$ in terms of Kac modules do exist. The
purpose of this article is to construct such a resolution for every
irreducible tensor module (see \secref{tensormodule}) of $\gl(m|n)$.

Roughly the idea of the construction is to exploit the connection
between the irreducible tensor representations of the Lie
superalgebra $\gl(m|n)$ and the polynomial representations of the
general linear algebra $\gl(m+n)$ in the limit $n\to\infty$. This
allows us to construct a ``weak'' resolution. The strong
resolution is then obtained from the weak version using Brundan's
Kazhdan-Lusztig theory of $\gl(m|n)$ \cite{B}.

All vector spaces, algebras and tensor products are over the complex
number field $\C$.

\section{Preliminaries}

Let $m\in\N$ and $n\in\N\cup\{\infty\}$, and set
$I(m|n)=\{-m,\cdots,-1,1,\dots,n\}$ for $n \in{\mathbb N}$, and
$I(m|n)=\{-m,\cdots,-1 \}\cup{\mathbb N}$ for $n=\infty$. Let
$\mathcal P_{m|n}$ denote the set of partitions
$\la=(\la_{-m},\cdots,\la_{-1},\la_1,\la_2,\cdots)$ with $\la_1\le
n$.  The set $\mathcal P_{m|\infty}$ is the set of all partitions.
For a partition $\la$, we use $\lambda'$, $\ell(\lambda)$, and
$|\lambda|$ to denote its conjugate, length, and size,
respectively.

\subsection{The Lie algebra $\gl(m+n)$} We let $\C^{m+n}$ stand for the complex space of dimension
$m+n$ with the standard basis $\{\,{e}_i\,|\,i\in I(m|n)\,\}$. Let
$\G=\gl(m+n)$ be the general linear algebra which acts naturally on
$\C^{m+n}$. In the case of $n=\infty$, we let $\G$ consist of linear
transformations vanishing on all but finitely many $e_j$'s. Denote
by $\{\,{E}_{ij}\,|\,i,j\in I(m|n)\,\}$ the set of elementary
matrices in $\G$. Then $\{\,{E}_{jj}\,\vert\, j\in I(m|n)\,\}$ spans
the standard Cartan subalgebra $\h=\h_n$, while
$\{\,{E}_{ij}\,\vert\, i\le j\,\}$ spans the standard Borel
subalgebra. For $\la\in\h^*$ we denote by $L(\G,\la)$ the
irreducible highest weight $\G$-module with highest weight $\la$.

Let $\epsilon_j\in\h^*$ be determined by
$\langle\epsilon_j,E_{ii}\rangle=\delta_{ij}$ for $i,j\in I(m|n)$.
Let $\alpha_i=\epsilon_i-\epsilon_{i+1}$, for $i\in I(m|n)$ such
that $i+1\in I(m|n)$, and $\alpha_{-1}=\epsilon_{-1}-\epsilon_1$.
Then the set $\{\alpha_i\}$ is a set of simple roots of
$\G'=[\G,\G]$, and we denote the set of positive and negative
roots by $\Delta^\pm$, respectively.  Let
$\Delta_0^\pm=\Delta^\pm\cap\left(\sum_{i\not=-1}\ZZ\alpha_i\right)$
and $\Delta^\pm(0)=\Delta^\pm\setminus\Delta^\pm_0$.

Let $\{{\alpha}^{\vee}_i\}$ denote the corresponding simple coroots
and let $\{e_i,f_i,{\alpha}^{\vee}_i\}$ be the corresponding
Chevalley generators of $\G'$. Let $\rho_c\in\h^*$ be determined by
$\langle\rho_c,E_{jj}\rangle=-j$ for $j<0$, and
$\langle\rho_c,E_{jj}\rangle=1-j$ for $j>0$.

The Lie algebra $\G$ has a $\ZZ$-gradation determined by the
eigenvalues of the operator
$\hf\left(\sum_{i<0}E_{ii}-\sum_{j>0}E_{jj}\right)$.  We have
\begin{equation*}
\G=\G_{-1}\oplus\G_0\oplus\G_{+1}.
\end{equation*}
Note that $\G_0\cong\gl(m)\oplus\gl(n)$ and
$\G_{-1}\cong\C^{m*}\otimes\C^n$ as $\G_0$-modules. Set ${\mf
p}:=\G_0\oplus\G_{+1}$ and let $L^0(\la)$ be the irreducible
representation of $\G_0$ with highest weight $\la\in\h^*$. We extend
$L^0(\la)$ trivially to a ${\mf p}$-module, for which we also write
$L^0(\la)$. Denote the generalized Verma module by
$$V(\G,\la):=\text{Ind}_{{\mf p}}^{\G}L^0(\la).$$

\subsection{The Lie superalgebra $\gl(m|n)$}
Now we let $\C^{m|n}$ stand for the complex superspace of dimension
$(m|n)$ with the standard basis $\{\,\overline{e}_i\,|\,i\in
I(m|n)\,\}$. We assume that ${\rm deg}\overline{e}_i =0$ and $1$ if
$i<0$ and $i>0$, respectively. Let $\SG=\gl(m|n)$ be the general
linear superalgebra acting naturally on $\C^{m|n}$. For $n=\infty$,
we use a similar convention as before. Denote by
$\{\,\overline{E}_{ij}\,|\,i,j\in I(m|n)\,\}$ the set of elementary
matrices in $\SG$. Then $\{\,\overline{E}_{jj}\,\vert\, j\in
I(m|n)\,\}$ spans the standard Cartan subalgebra $\sh=\sh_n$, while
$\{\,\overline{E}_{ij}\,\vert\, i\le j\,\}$ spans the standard Borel
subalgebra $\ov{\mf{b}}$. For $\la\in\sh^*$, we denote by
$L(\SG,\la)$ the irreducible highest weight $\SG$-module with
highest weight $\la$.

Let $\delta_j\in\sh^*$ be determined by
$\langle\delta_j,\ov{E}_{ii}\rangle=\delta_{ij}$ and let
$\rho_s\in\sh^*$ be determined by
$\langle\rho_s,\ov{E}_{jj}\rangle=-j$ for $i,j\in I(m|n)$. Let
$\beta_i=\delta_i-\delta_{i+1}$ for $i\in I(m|n)$ such that $i+1\in
I(m|n)$, and $\beta_{-1}=\delta_{-1}-\delta_1$. Then
$\{\,\beta_i\,\}$ is a set of simple roots of $\SG'=[\SG,\SG]$.

The Lie superalgebra $\SG$ also has a $\ZZ$-gradation determined
by the eigenvalues of the of the operator
$\hf\left(\sum_{i<0}\ov{E}_{ii}-\sum_{j>0}\ov{E}_{jj}\right)$. We
have
\begin{equation*}
\SG=\SG_{-1}\oplus\SG_0\oplus\SG_{+1}.
\end{equation*}
Note that $\SG_0\cong\G_0$ and $\SG_{-1}\cong\C^{m*}\otimes\C^n$ as
$\SG_0$-modules. We set $\ov{\mf p}:=\SG_0\oplus\SG_{+1}$.  Given
$\la\in\sh^*$, we may extend $L^0(\la)$ trivially to a $\ov{\mf
p}$-module, which we also denote by $L^0(\la)$. Define the {\em Kac
module} to be
$$V(\SG,\la):=\text{Ind}_{\ov{\mf p}}^{\SG}L^0(\la).$$
\begin{definition} A $\SG$-module $V$ is said to have a {\em Kac flag} if it has a
filtration of $\SG$-modules of the form
\begin{equation*}
0=V_0\subseteq V_1\subseteq V_2\subseteq \cdots\subseteq
V_{l-1}\subseteq V_l=V,
\end{equation*}
such that $V_j/V_{j-1}$ is isomorphic to a Kac module for
$j=1,\ldots,l$.
\end{definition}

\begin{definition} Let $n\in\N\cup\{\infty\}$. Given a sequence of
integers of the form
\begin{EQA}[c]\yesnumber\label(~){aux100}
\mu=(\mu_{-m},\cdots,\mu_{-1},\mu_1,\mu_2,\cdots),\label(~.a){aux100-a}\\
\noalign{\hbox{with $\mu_k=0$ for $k\gg 0$ when $n=\infty$, and}}\\
\mu=(\mu_{-m},\cdots,\mu_{-1},\mu_1,\mu_2,\cdots,\mu_n),\label(~.b){aux100-b}
\end{EQA}
when $n\in\N$, we may interpret it as $\sum_{i\ge
-m,i\not=0}\la_i\epsilon_i\in\h^*_n$ or $\sum_{i\ge
-m,i\not=0}\la_i\delta_i\in\sh^*_n$. Suppose now that $\mu$ as in
\eqnref{aux100} such that $(\mu_1,\mu_2,\cdots)$ is a partition. We
define $\mu^\natural$ to be the integer sequence
\begin{equation}\label{def:natural}
\mu^\natural:=(\mu_{-m},\cdots,\mu_{-1},\mu_1',\mu_2',\cdots).
\end{equation}

Let $\widetilde{\mc{X}}_{m|n}$ be the set of integer sequences of
the form \eqnref{aux100} with $\mu_j\ge\mu_{j+1}$, for all $j< n$
with $j\not= 0,\, -1$. Let
$\mc{X}_{m|n}\subseteq\widetilde{\mc{X}}_{m|n}$ consist of those
$\mu$'s such that $(\mu_1,\mu_2,\cdots)$ is a partition. For
$\mu\in\mc{X}_{m|n}$, $\mu^\natural$ is well-defined, and the map
$\mu\rightarrow\mu^\natural$ is a bijection on $\mc{X}_{m|\infty}$.
\end{definition}

\subsection{Irreducible tensor
$\gl(m|n)$-modules}\label{tensormodule} The tensor powers of
$\C^{m|n}$ are completely reducible as  $\SG$-modules. Indeed the
irreducible representations that appear in these decompositions are
as follows.  An irreducible representation of $\SG$ appears as a
component of $\left(\C^{m|n}\right)^{\otimes k}$ if and only if it
is of the form $L(\SG,\la^\natural)$, where $\la\in\mc P_{m|n}$ with
$|\la|=k$ \cite{S, BR}. We call these irreducible $\SG$-modules {\em
irreducible tensor $\SG$-modules}.

Let $\la\in\mc{P}_{m|\infty}$. Clearly as $\G_0$-modules $L(\G,\la)$
and $L(\SG,\la^\natural)$ are direct sums of $L^0(\eta)$ with
$\eta\in\mc{X}_{m|\infty}$. We have the following description of
irreducible tensor $\SG$-modules.
\begin{prop}\label{hookschur}
Assume that $n=\infty$. For $\la\in\mc{P}_{m|\infty}$ and $\eta\in
\mc{X}_{m|\infty}$, the $\G_0$-module $L^0(\eta)$ is an irreducible
component of $L(\G,\la)$ if and only if the $\SG_0$-module
$L^0(\eta^\natural)$ is an irreducible component of
$L(\SG,\la^\natural)$.  Furthermore, their multiplicities coincide.
\end{prop}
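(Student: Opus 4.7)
My plan is to prove the proposition by comparing the $\G_0$- and $\SG_0$-characters of $L(\G,\la)$ and $L(\SG,\la^\natural)$ under the common Levi $\G_0\cong\SG_0\cong\gl(m)\oplus\gl(\infty)$. Since $\la\in\mc{P}_{m|\infty}$ is a partition, $L(\G,\la)$ is a polynomial $\gl(m+\infty)$-module and its formal character is the Schur function $s_\la$ in the variables $\{e^{\epsilon_j}:j\in I(m|\infty)\}$. Splitting these as $x_-=(e^{\epsilon_{-m}},\dots,e^{\epsilon_{-1}})$ and $x_+=(e^{\epsilon_1},e^{\epsilon_2},\dots)$, the classical factorization of Schur functions yields
\begin{equation*}
\text{ch}\,L(\G,\la)\;=\;s_\la(x_-,x_+)\;=\;\sum_{\mu,\nu}c^\la_{\mu\nu}\,s_\mu(x_-)\,s_\nu(x_+),
\end{equation*}
where $c^\la_{\mu\nu}$ is a Littlewood--Richardson coefficient. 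Hence $L^0(\eta)$ occurs in $L(\G,\la)$ with multiplicity $c^\la_{\eta_-,\eta_+}$, writing $\eta_-=(\eta_{-m},\dots,\eta_{-1})$ and $\eta_+=(\eta_1,\eta_2,\dots)$.

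On the super side, the key classical input is that the irreducible tensor module $L(\SG,\la^\natural)$ has character the \emph{hook Schur function} $hs_\la$ of Berele--Regev, in even variables $y_-=(e^{\delta_{-m}},\dots,e^{\delta_{-1}})$ and odd variables $y_+=(e^{\delta_1},e^{\delta_2},\dots)$. The Berele--Regev factorization identity gives
\begin{equation*}
hs_\la(y_-;y_+)\;=\;\sum_{\mu\subseteq\la}s_\mu(y_-)\,s_{(\la/\mu)'}(y_+)\;=\;\sum_{\mu,\nu}c^\la_{\mu\nu}\,s_\mu(y_-)\,s_{\nu'}(y_+),
\end{equation*}
using $s_{(\la/\mu)'}=\omega(s_{\la/\mu})=\sum_\nu c^\la_{\mu\nu}s_{\nu'}$. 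Restricting to $\SG_0$ identifies $L^0(\xi)$ with the $\gl(m)\oplus\gl(\infty)$-module of character $s_{\xi_-}(y_-)s_{\xi_+}(y_+)$, so $L^0(\xi)$ occurs in $L(\SG,\la^\natural)$ with multiplicity $c^\la_{\xi_-,(\xi_+)'}$.

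Taking $\xi=\eta^\natural$ gives $\xi_-=\eta_-$ and $\xi_+=(\eta_+)'$, whence $(\xi_+)'=\eta_+$; consequently $L^0(\eta^\natural)$ appears in $L(\SG,\la^\natural)$ with the same multiplicity $c^\la_{\eta_-,\eta_+}$ as $L^0(\eta)$ in $L(\G,\la)$, proving both the equivalence and the equality of multiplicities. The main obstacle is really the hook-Schur character formula for irreducible tensor $\gl(m|n)$-modules, which is a classical result (Berele--Regev, Sergeev) we would simply invoke rather than re-derive; given it, the proposition is essentially a reformulation of the Berele--Regev branching through conjugation of the odd part.
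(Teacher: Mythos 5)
Your argument is correct and follows essentially the same route as the paper: the paper's proof simply cites the Berele--Regev hook Schur character formula for $L(\SG,\la^\natural)$ and declares the branching comparison immediate, which is exactly the Littlewood--Richardson computation you carry out explicitly via $s_\la(x_-,x_+)=\sum_{\mu,\nu}c^\la_{\mu\nu}s_\mu(x_-)s_\nu(x_+)$ and $hs_\la(y_-;y_+)=\sum_{\mu,\nu}c^\la_{\mu\nu}s_\mu(y_-)s_{\nu'}(y_+)$. Your write-up just makes the implicit details (the conjugation of the odd part matching the definition of $\natural$) visible.
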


\begin{proof}
This is an immediate consequence of the well-known fact that the
character of $L(\SG,\la^\natural)$ is given by the so-called Hook
Schur function associated with $\la^\natural$ \cite[Theorem
6.10]{BR}.
\end{proof}

\begin{rem}\label{decomposition} For a partition $\la$ with $\ell(\la)\le m+n$ and $k\ge 0$, it is
easy to see that $\Lambda^k(\G_{-1})\otimes {L}(\G,\la)$ as a
$\G_0$-module decomposes into a direct sum of irreducible
$\G_0$-modules with highest weights belonging to $\mc{X}_{m|n}$.
Similarly, for $\la\in\mc{P}_{m|n}$ and $k\ge 0$,
$\Lambda^k(\SG_{-1})\otimes {L}(\SG,\la^\natural)$ as a
$\SG_0$-module decomposes into a direct sum of irreducible
$\SG_0$-modules of the form $L^0(\mu)$ with $\mu\in\mc{X}_{m|n}$.
\end{rem}

\section{Eigenvalues of Casimir operators}

Throughout this section, we assume that $n=\infty$ unless otherwise
specified.

We fix a symmetric bilinear form $(\cdot\vert\cdot)_c$ on $\h^*$
satisfying
\begin{align}\label{bilineardef}
&(\la\vert \epsilon_i)_c=\langle \la,E_{ii}\rangle,  \quad
\la\in\h^*,i\in I(m|n).
\end{align}
By defining $({\alpha}^{\vee}_i\vert
{\alpha}^{\vee}_j)_c:=(\alpha_i\vert\alpha_j)_c$ for simple
coroots ${\alpha}^{\vee}_i$ and ${\alpha}^{\vee}_j$, we obtain a
symmetric bilinear form on the Cartan subalgebra of $\G'$, which
can be extended to a non-degenerate invariant symmetric bilinear
form on $\G'$ such that
\begin{equation}\label{aux:casimir2}
(e_i\vert f_j)_c=\delta_{ij}.
\end{equation}
Since every root space $\G_\alpha$ is one-dimensional, we can choose
a basis $\{u_{\alpha}\}$ of $\G_\alpha$ for $\alpha\in\Delta^+$ and
a dual basis $\{u^{\alpha}\}$ of $\G_{-\alpha}$ with respect to
$(\cdot\vert\cdot)_c$.

Let $V=\bigoplus_{\mu} V_\mu$ be a highest weight $\G$-module, where
$V_\mu$ denotes the $\mu$-weight space of $V$. Define
$\Gamma_1:V\rightarrow V$ to be the linear map that acts as the
scalar $(\mu+2\rho_c\vert\mu)_c$ on $V_\mu$. Let
$\Gamma_2:=2\sum_{\alpha\in\Delta^+}u^{\alpha}u_{\alpha}$.  The {\em
Casimir operator} (cf.~\cite{J}) is defined to be
\begin{equation*}
\Omega:=\Gamma_1+\Gamma_2.
\end{equation*}
It follows from \eqnref{bilineardef} and \eqnref{aux:casimir2} that
$\Omega$ commutes with the action of $\G$ on $V$
(cf.~\cite[Proposition 3.6]{J}). Thus, if $V$ is generated by a
highest weight vector with highest weight $\la$, then $\Omega$ acts
on $V$ as the scalar $(\la+2\rho_c\vert\la)_c$.

To produce the Casimir operator for $\SG$ we fix a symmetric
bilinear form $(\cdot\vert\cdot)_s$ on $\sh^*$ satisfying
\begin{align*}
&(\la\vert \delta_i)_s=-\text{sign}(i)\langle \la,E_{ii}\rangle,
\quad \la\in\sh^*,i\in I(m|n).
\end{align*}
An analogous argument allows us to generalize the construction above
and define the Casimir operator $\ov{\Omega}$ of the Lie
superalgebra $\SG$ that acts on a highest weight module with highest
weight $\gamma\in\overline{\h}^*$ as the scalar
$(\gamma+2\rho_s|\gamma)_s$.  We omit the details.

We will need the Weyl group of $\gl(m+\infty)$ in the sequel. For
each $\alpha_j$, define simple reflection $\sigma_j$ by
\begin{equation*}
\sigma_j(\mu):=\mu-\langle\mu,{\alpha}^{\vee}_j\rangle\alpha_j,
\end{equation*}
where $\mu\in\h^*$. Let $W$ be the subgroup of ${\rm Aut}(\h^*)$
generated by the $\sigma_j$'s. For each $w\in W$, we let $l(w)$
denote the length of $w$. We have an action on $\h$ given by
$\sigma_j(h)=h-\langle\alpha_j,h\rangle {\alpha}^{\vee}_j$ for
$h\in\h$, so that $\langle w(\mu),w(h)\rangle=\langle\mu,h\rangle$
for $\mu\in\h^*$ and $h\in\h$. We also define
\begin{equation*}
w\circ\mu:=w(\mu+\rho_c)-\rho_c, \quad w\in W,\ \mu\in\h^*.
\end{equation*}
Consider $W_{0}$ the subgroup of $W$ generated by $\sigma_j$ with
$j\not=-1$. Let
\begin{equation*}
W^{0}:=\{\,w\in W\,\vert\,
w(\Delta^-)\cap\Delta^+\subseteq\Delta^+(0)\,\}.
\end{equation*}
It is well-known that $W=W_0\, W^0$ and $W^0$ is the set of the
minimal length representatives of the right coset space
$W_0\backslash W$ (cf.~\cite[1.3.17]{Ku}). For $k\in\ZZ_+$, set
$$W^0_k:=\{\,w\in W^0\,\vert\, l(w)=k\,\}.$$

Given $\lambda\in \mc{P}_{m|\infty}$, we have
$\langle\la,{\alpha}^{\vee}_j\rangle\in\ZZ_+$ for all $j$. Since
$w\in W^0$ implies that $w^{-1}(\Delta^+_0)\subseteq\Delta^+$, we
obtain $\langle w\circ \la,{\alpha}^{\vee}_j\rangle\in\ZZ_+$, for
all $j\not=-1$, and $w\circ\lambda\in\mc{X}_{m|\infty}$.

The following proposition is well-known from the theory of standard
modules over generalized Kac-Moody algebras (see
e.g.~\cite[Proposition 3.11]{J}).

\begin{prop}\label{aux01} For
$\la\in\mc{P}_{m|\infty}$ and $\eta\in \mc{X}_{m|\infty}$, the
irreducible ${\mathfrak g}_0$-module $L^0(\eta)$ is a component of
$\Lambda^k({\mathfrak g}_{-1})\otimes {L}(\G,\la)$ with $(\eta
+2\rho_c\vert\eta)_c=(\la+2\rho_c\vert\la)_c$ if and only if there
exists $w\in W^0_k$ with $w\circ\la=\eta$. Furthermore each such
$L^0(\eta)$ appears with multiplicity one.
\end{prop}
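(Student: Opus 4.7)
The plan is to recognize this proposition as a form of Kostant's theorem for the parabolic $\mf p = \G_0 \oplus \G_{+1}$, which is essentially the content of the Jantzen reference already cited. There are three pieces to fit together: an identification of $\Lambda^k(\G_{-1}) \otimes L(\G,\la)$ with a Chevalley--Eilenberg cochain complex, a Laplacian identity that translates ``matching Casimir eigenvalue'' into ``harmonic'', and Kostant's computation of the cohomology.

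First I would identify $\Lambda^k(\G_{-1}) \otimes L(\G,\la)$ with the $k$-th cochain group of the Chevalley--Eilenberg complex computing $H^\bullet(\G_{+1}, L(\G,\la))$. Since the form $(\cdot|\cdot)_c$ pairs $\G_{-1}$ and $\G_{+1}$ nondegenerately and $\G_0$-equivariantly via \eqnref{aux:casimir2}, it furnishes a $\G_0$-module isomorphism $\G_{-1} \cong \G_{+1}^*$, and hence
\[
\Lambda^k(\G_{-1}) \otimes L(\G,\la) \;\cong\; \Lambda^k(\G_{+1}^*) \otimes L(\G,\la) \;=\; C^k(\G_{+1}, L(\G,\la))
\]
as $\G_0$-modules. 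In the case $n = \infty$ one interprets the dual as the restricted (weight-graded) dual; this is legitimate because $\G_{+1}$ preserves the (finite-dimensional) weight spaces of $L(\G,\la)$, so all constructions are made one weight at a time.

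Next I would apply the Kostant Laplacian identity. With $d$ the Chevalley--Eilenberg differential and $d^*$ its adjoint with respect to the natural $\G_0$-invariant form on $C^\bullet$, the Laplacian $\Delta := d d^* + d^* d$ satisfies the classical relation
\[
\Delta \;=\; \Omega_{C^\bullet} - (\la + 2\rho_c \,|\, \la)_c \cdot \mathrm{id},
\]
where $\Omega$ is the Casimir of $\G'$. Since $\Omega$ acts on an irreducible $\G_0$-component $L^0(\eta)$ of $C^k$ by $(\eta + 2\rho_c\,|\,\eta)_c$, the kernel $\ker \Delta|_{C^k}$ is exactly the isotypic sum of those $L^0(\eta)$ whose Casimir eigenvalue equals $(\la + 2\rho_c\,|\,\la)_c$. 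By Hodge theory this harmonic subspace maps $\G_0$-equivariantly onto $H^k(\G_{+1}, L(\G,\la))$, which by Kostant's formula (i.e.\ \cite[Proposition 3.11]{J}) is $\bigoplus_{w \in W^0_k} L^0(w \circ \la)$ with each summand of multiplicity one. Comparing these two descriptions of the harmonic subspace gives the proposition.

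The main obstacle is handling the $n = \infty$ case: $\G_{+1}$ is infinite-dimensional, so the Chevalley--Eilenberg complex, its adjoint, and Hodge decomposition all require justification. However every operator in sight ($d$, $d^*$, $\Delta$, $\Omega$) is homogeneous of some fixed weight and $L(\G,\la)$ has finite-dimensional weight spaces, so the Laplacian decomposition and harmonic theory apply weight-by-weight. This is precisely the setting treated in \cite[Proposition 3.11]{J}, from which the statement may be quoted directly.
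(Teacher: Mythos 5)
Your proof is correct and follows the same route the paper intends: the paper gives no argument for this proposition, simply citing it as well known from \cite[Proposition~3.11]{J}, and your Chevalley--Eilenberg/Laplacian/Hodge unpacking is precisely the standard Kostant machinery that reference is invoking. One small sign slip: since the Laplacian $\Delta=dd^*+d^*d$ is positive semi-definite and $L^0(\la)$ itself is harmonic, the Kostant identity reads $\Delta=\tfrac{1}{2}\bigl((\la+2\rho_c\vert\la)_c\cdot\mathrm{id}-\Omega_{C^\bullet}\bigr)$ rather than $\Omega_{C^\bullet}-(\la+2\rho_c\vert\la)_c\cdot\mathrm{id}$, but this has no effect on the conclusion that the harmonic subspace is exactly the sum of $\G_0$-isotypic components with matching Casimir eigenvalue.
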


\begin{lem}\label{mult:same}  For
$\la\in\mc{P}_{m|\infty}$ and $\eta\in \mc{X}_{m|\infty}$,
$L^0(\eta)$ is an irreducible ${\mathfrak g}_0$-module in
$\Lambda^k({\mathfrak g}_{-1})\otimes {L}(\G,\la)$ if and only if
$L^0(\eta^\natural)$ is an irreducible $\SG_0$-module in
$\Lambda^k(\SG_{-1})\otimes {L}(\SG,\la^\natural)$. Furthermore, the
multiplicities are the same.
\end{lem}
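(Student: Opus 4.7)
The plan is to reduce the lemma to a combinatorial identity of Littlewood--Richardson coefficients via character computation. I would first invoke \propref{hookschur} to identify the $\G_0$-branching of $L(\G,\la)$ with the $\SG_0$-branching of $L(\SG,\la^\natural)$ under the bijection $\eta\leftrightarrow\eta^\natural$, with common multiplicities $c_\eta$. This reduces the lemma to the pointwise equality
\[
[\Lambda^k(\G_{-1})\otimes L^0(\eta):L^0(\tau)]_{\G_0}=[\Lambda^k(\SG_{-1})\otimes L^0(\eta^\natural):L^0(\tau^\natural)]_{\SG_0}
\]
for each $\eta,\tau\in\mc{X}_{m|\infty}$, which upon summing against $c_\eta$ recovers the full statement.

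Next, I would compute both multiplicities by exploiting the isomorphism $\G_{-1}\cong\SG_{-1}\cong(\C^m)^*\otimes\C^\infty$ of $\G_0\cong\SG_0\cong\gl(m)\oplus\gl(\infty)$-modules. A Cauchy-type identity decomposes $\Lambda^k(\G_{-1})$ and $\Lambda^k(\SG_{-1})$ as direct sums of $\G_0$-irreducibles indexed by a partition $\mu\vdash k$ with $\ell(\mu)\le m$; in each summand the $\gl(m)$-factor is the Schur functor attached to $\mu$, while the $\gl(\infty)$-factor is conjugated between the two decompositions (this conjugation reflects the difference in behavior between the ordinary and super exterior powers of the odd abelian space $\SG_{-1}$). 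Applying the $\gl(\infty)$-Littlewood--Richardson rule to the tensor products with $L^0(\eta)$ and $L^0(\eta^\natural)$ respectively rewrites each multiplicity as a triple sum of products of LR coefficients and $\gl(m)$ tensor product multiplicities.

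Finally, I would invoke the fundamental conjugation symmetry $c^\nu_{\alpha,\beta}=c^{\nu'}_{\alpha',\beta'}$ of Littlewood--Richardson coefficients, applied to $\mu$ and to the positive parts $(\eta_1,\eta_2,\ldots)$ and $(\tau_1,\tau_2,\ldots)$ on the super side, and combined with a reindexing that also conjugates the factor coming from the $L(\SG,\la^\natural)$-branching; these conjugations compose to identify the super triple sum with the Lie algebra one term by term. The main obstacle I expect is the bookkeeping of the $\gl(m)$-tensor product factors when the ``negative parts'' $(\eta_{-m},\ldots,\eta_{-1})$ and $(\tau_{-m},\ldots,\tau_{-1})$ are general weakly decreasing integer sequences rather than partitions, which I would handle by twisting with a sufficiently high power of the determinant representation to reduce to the partition case, after which the standard LR calculus applies uniformly on both sides.
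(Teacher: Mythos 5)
Your proposal is correct and takes essentially the same route as the paper: \propref{hookschur} combined with the Cauchy/Howe-duality decompositions of $\Lambda^k(\G_{-1})$ and $\Lambda^k(\SG_{-1})\cong S^k(\C^{m*}\otimes\C^n)$, with the Littlewood--Richardson conjugation symmetry $c^{\nu}_{\alpha\beta}=c^{\nu'}_{\alpha'\beta'}$ spelling out what the paper summarizes as the compatibility of $\natural$ with tensor products. Only a small simplification: the determinant twist is unnecessary, since $\eta$, $\eta^\natural$ (and $\tau$, $\tau^\natural$) share the same negative parts and the $\gl(m)$-factors in the two decompositions coincide, so the $\gl(m)$ tensor-product multiplicities are literally identical and only the $\gl(\infty)$-factors need the conjugation argument.
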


\begin{proof}
The symmetric \cite[Theorem 2.1.2]{H} and skew-symmetric
\cite[Theorem 4.1.4]{H} $(\gl,\gl)$-Howe dualities give the precise
decompositions of $\Lambda^k(\SG_{-1})\cong S^k(\C^{m*}\otimes\C^n)$
and $\Lambda^k(\G_{-1})\cong \Lambda^k(\C^{m*}\otimes\C^n)$ as
$\G_0$-modules, respectively.  From these decompositions one sees
that $L^0(\eta)$ is an irreducible component in $\Lambda^k(\G_{-1})$
if and only if $L^0(\eta^\natural)$ is an irreducible component in
$\Lambda^k(\SG_{-1})$. This fact combined with \propref{hookschur}
and the compatibility of $\natural$ under tensor products completes
the proof.
\end{proof}

We need the following combinatorial lemma.

\begin{lem}\label{aux111} Let $\la=(\la_1, \la_2,\cdots,\la_N)$ be a partition with $\ell(\la)\le
N$. For $1\le i\le N$ the sets $\{\la'_i-i+\hf\,\vert\,
\la'_i-i+\hf>0\}$ and $\{-\la_i+i-\hf \,\vert\, \la_i-i+\hf<0\}$ are
disjoint. Moreover, $\{\la'_i-i+\hf\,\vert\,
\la'_i-i+\hf>0\}\cup\{-\la_i+i-\hf \,\vert\, \la_i-i+\hf<0\}$ is a
permutation of the set $\{\hf,\frac{3}{2},\cdots,N-\hf\}$.
\end{lem}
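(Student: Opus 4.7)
The plan is to organize the proof around the Durfee square size $d := \max\{i \ge 0 : \la_i \ge i\}$, which by the cell-containment duality for conjugate partitions also equals $\max\{i \ge 0 : \la'_i \ge i\}$. The two defining conditions separate cleanly along this index: $\la'_i - i + \hf > 0$ iff $1 \le i \le d$, while $\la_i - i + \hf < 0$ iff $d < i \le N$. Writing
\begin{equation*}
A = \{\la'_i - i + \hf : 1 \le i \le d\}, \qquad B = \{i - \la_i - \hf : d < i \le N\},
\end{equation*}
the sequences $\la'_i - i$ and $i - \la_i$ are respectively strictly decreasing and strictly increasing in $i$ (since $\la'_i \ge \la'_{i+1}$ and $\la_i \ge \la_{i+1}$), so both $A$ and $B$ consist of distinct positive half-integers with $|A| + |B| = d + (N-d) = N$.

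Next I would verify the inclusion $A \cup B \subseteq \{\hf, \tfrac{3}{2}, \ldots, N - \hf\}$: the largest element of $A$ is $\la'_1 - \hf \le N - \hf$ because $\ell(\la) = \la'_1 \le N$, and the largest element of $B$ is $N - \la_N - \hf \le N - \hf$ because $\la_N \ge 0$.

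Since the right-hand side has exactly $N$ elements, the lemma then reduces to the disjointness $A \cap B = \emptyset$; this is the only substantive step. Suppose for contradiction that $\la'_i - i + \hf = j - \la_j - \hf$ for some $1 \le i \le d$ and $d < j \le N$, equivalently $\la'_i + \la_j = i + j - 1$. I would split on the cell-containment dichotomy $(j,i) \in \la$ versus $(j,i) \notin \la$: by the defining equivalence $\la_j \ge i \iff \la'_i \ge j$, the first case forces $\la'_i + \la_j \ge i + j$ and the second forces $\la'_i + \la_j \le (j-1) + (i-1) = i + j - 2$, each contradicting the assumed value $i + j - 1$.

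The main obstacle is precisely the disjointness step, but once one recognizes the Durfee decomposition and invokes conjugate duality it collapses to the short case split above; the remaining equality $A \sqcup B = \{\hf, \tfrac{3}{2}, \ldots, N - \hf\}$ is then forced by counting.
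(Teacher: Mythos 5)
Your proof is correct and takes essentially the same route as the paper's: you reduce everything to the disjointness $A\cap B=\emptyset$ by a counting argument (strict monotonicity gives distinctness and $|A|+|B|=N$), and you settle disjointness from $\la'_i+\la_j=i+j-1$ by the same conjugacy case split $\la'_i\ge j$ versus $\la'_i<j$. Your Durfee-square bookkeeping is just a more explicit form of the paper's observation that $\la'_i-i+\hf>0$ if and only if $\la_i-i+\hf>0$, so no substantive difference.
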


\begin{proof}
The sets  $\{\la'_i-i+\hf\,\vert\, \la'_i-i+\hf>0\}$,
$\{-\la_i+i-\hf \,\vert\, \la_i-i+\hf<0\}$,
$\{\hf,\frac{3}{2},\cdots,N-\hf\}$ are denoted by $A$, $B$ and $C$,
respectively. We first observe that the sequence
$\{\la'_i-i+\hf\}_{i=1}^N$ is strictly decreasing, while
$\{-\la_i+i-\hf \}_{i=1}^N$ is strictly increasing. Also $A$ and $B$
are subsets of $C$. Since $\la'_i-i+\hf>0$ if and only if
$\la_i-i+\hf>0$, we have $i< j$ for all $\la'_i-i+\hf\in A$ and
$-\la_j+j-\hf\in B$. Furthermore, the sum of the cardinality of $A$
and the cardinality of $B$ equals the cardinality of $C$. So it is
enough to show $A\cap B=\emptyset$. Suppose that $\la'_i-i+\hf\in A$
and $-\la_j+j-\hf\in B$ with $\la'_i-i+\hf=-\la_j+j-\hf$. We have
$i< j$ and $\la'_i+\la_j=i+j-1$. If $\la'_i\ge j$, we have
$\la'_i+\la_j\ge j+i>j+i-1$. If $\la'_i< j$, we have $\la'_i+\la_j<
j+(i-1)=j+i-1$. In either case, $\la'_i+\la_j\not= i+j-1$. Thus we
have $A\cap B=\emptyset$, which completes the proof.
\end{proof}

\begin{lem}\label{finitecasimir} For $\mu\in \mc{X}_{m|\infty}$, we have
$(\mu+2\rho_c|\mu)_c=(\mu^\natural+2\rho_s|\mu^\natural)_s$.
\end{lem}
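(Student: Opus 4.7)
My plan is to expand both scalars from their definitions, reduce the desired equality to a combinatorial identity about the partition $\la:=(\mu_1,\mu_2,\ldots)$, and then derive that identity from Lemma \ref{aux111}.

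First I would compute $(\mu+2\rho_c|\mu)_c$ using $(\epsilon_i|\epsilon_j)_c=\delta_{ij}$ together with $\langle\rho_c,E_{jj}\rangle=-j$ for $j<0$ and $1-j$ for $j>0$, obtaining
\[
(\mu+2\rho_c|\mu)_c=\sum_{i<0}\mu_i(\mu_i-2i)+\sum_{i>0}\la_i(\la_i+2-2i).
\]
The parallel computation using $(\delta_i|\delta_j)_s=-\mathrm{sign}(i)\delta_{ij}$, $\langle\rho_s,\ov{E}_{jj}\rangle=-j$, and $\mu^\natural_i=\mu_i$ for $i<0$, $\mu^\natural_i=\la_i'$ for $i>0$, gives
\[
(\mu^\natural+2\rho_s|\mu^\natural)_s=\sum_{i<0}\mu_i(\mu_i-2i)-\sum_{i>0}\la_i'(\la_i'-2i).
\]
The contributions at negative indices agree, and the lemma reduces to the partition identity
\[
\sum_{i>0}\la_i(\la_i+2-2i)+\sum_{i>0}\la_i'(\la_i'-2i)=0.
\]

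Next I complete the square via
\[
\la_i(\la_i+2-2i)=(\la_i-i+\hf)^2-(i-\hf)^2+\la_i,\qquad \la_i'(\la_i'-2i)=(\la_i'-i+\hf)^2-(i-\hf)^2-\la_i'.
\]
Summing each side over $1\le i\le N$ for any $N\ge\max(\la_1,\ell(\la))$ and using $\sum_i\la_i=\sum_i\la_i'=|\la|$, the $\pm|\la|$ corrections cancel and the target identity becomes
\[
\sum_{i=1}^{N}(\la_i-i+\hf)^2+\sum_{i=1}^{N}(\la_i'-i+\hf)^2=2\sum_{i=1}^{N}(i-\hf)^2.
\]

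Finally, I invoke Lemma \ref{aux111} twice. Applied to $\la$, it says the disjoint union $\{\la_i'-i+\hf\mid\la_i'-i+\hf>0\}\cup\{-\la_i+i-\hf\mid\la_i-i+\hf<0\}$ equals $\{\hf,\tfrac{3}{2},\ldots,N-\hf\}$; applied to $\la'$, using $(\la')'=\la$, it identifies $\{\la_i-i+\hf\mid\la_i-i+\hf>0\}\cup\{-\la_i'+i-\hf\mid\la_i'-i+\hf<0\}$ with the same set. Because $\la_i-i+\hf$ and $\la_i'-i+\hf$ are half-integers they never vanish, so these two identifications between them account for every $(\la_i-i+\hf)^2$ and $(\la_i'-i+\hf)^2$ with $1\le i\le N$ exactly once. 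Squaring the elements of the two set equalities and summing yields the required identity. The main obstacle is simply pinning down the completion of squares that aligns with Lemma \ref{aux111} precisely; once this is arranged, the combinatorics closes the proof.
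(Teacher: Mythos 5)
Your proof is correct and follows the same route as the paper: expand both bilinear forms, cancel the negative-index terms, complete the square to reduce to the identity $\sum_{i=1}^N(\la_i-i+\hf)^2+\sum_{i=1}^N(\la_i'-i+\hf)^2=2\sum_{i=1}^N(i-\hf)^2$, and then apply Lemma~\ref{aux111} to both $\la$ and $\la'$. You have simply spelled out the ``direct calculation'' and the ``follows readily'' steps that the paper leaves implicit, including the key observation that the half-integers $\la_i-i+\hf$ and $\la_i'-i+\hf$ never vanish, which is what makes the two applications of Lemma~\ref{aux111} partition all the indices.
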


\begin{proof} A direct calculation shows that the lemma is
equivalent to the following identity for a partition
$\mu=(\mu_1,\mu_2,\cdots)$:
\begin{equation*}
\sum_{j>0}\mu_j^2-\sum_{j>0}2(j-1)\mu_j=\sum_{j>0}2j\mu_j'-\sum_{j>0}(\mu'_j)^2.
\end{equation*}
This identity is equivalent to
\begin{equation}\label{comb:id1}
\sum_{j=1}^N\left[\left(\mu_j-\left(j-\hf\right)\right)^2+\left(\mu'_j-\left(j-\hf\right)\right)^2\right]=2\sum_{j=1}^N\left(j-\hf\right)^2,
\end{equation}
where $N\ge\text{max}(\ell(\mu),\ell(\mu'))$. However
\eqnref{comb:id1} follows readily from \lemref{aux111} applied to
the partitions $\mu$ and $\mu'$.
\end{proof}

\begin{prop}\label{casimir:res} For
$\la\in\mc{P}_{m|\infty}$ and $\mu\in\overline{\h}^*$, the
irreducible $\SG_0$-module $L^0(\mu)$ is a component of
$\Lambda^k(\SG_{-1})\otimes {L}(\SG,\la^\natural)$ with $(\mu
+2\rho_s\vert\mu)_s=(\la^\natural+2\rho_s\vert\la^\natural)_s$ if
and only if there exists $w\in W^0_k$ with
$\mu=(w\circ\la)^\natural$. Furthermore, each such $L^0(\mu)$
appears with multiplicity one.
\end{prop}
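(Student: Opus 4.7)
The proof should be essentially a translation of Proposition~\ref{aux01} (which is the $\G$-module statement) into the $\SG$-module setting via the bijection $\eta\mapsto\eta^\natural$ on $\mc{X}_{m|\infty}$. I have three ready ingredients: Proposition~\ref{aux01} itself; Lemma~\ref{mult:same}, which matches the multiplicities of $L^0(\eta)$ in $\Lambda^k(\G_{-1})\otimes L(\G,\la)$ with those of $L^0(\eta^\natural)$ in $\Lambda^k(\SG_{-1})\otimes L(\SG,\la^\natural)$; and Lemma~\ref{finitecasimir}, which says that the $c$-Casimir eigenvalue on weight $\eta\in\mc{X}_{m|\infty}$ coincides with the $s$-Casimir eigenvalue on weight $\eta^\natural$.

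First, I would observe that by Remark~\ref{decomposition} any irreducible $\SG_0$-component $L^0(\mu)$ of $\Lambda^k(\SG_{-1})\otimes L(\SG,\la^\natural)$ has $\mu\in\mc{X}_{m|\infty}$, and since $\mu\mapsto\mu^\natural$ is a bijection on $\mc{X}_{m|\infty}$, I may write $\mu=\eta^\natural$ for a unique $\eta\in\mc{X}_{m|\infty}$. By Lemma~\ref{mult:same} this occurs (with a given multiplicity) exactly when $L^0(\eta)$ occurs in $\Lambda^k(\G_{-1})\otimes L(\G,\la)$ with that same multiplicity.

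Next, I translate the Casimir condition. Applying Lemma~\ref{finitecasimir} to both $\eta$ and to $\la\in\mc{P}_{m|\infty}\subseteq\mc{X}_{m|\infty}$ (noting $\la^\natural$ is precisely the image of $\la$ under $\natural$), the equality
\begin{equation*}
(\mu+2\rho_s\vert\mu)_s=(\la^\natural+2\rho_s\vert\la^\natural)_s
\end{equation*}
is equivalent to
\begin{equation*}
(\eta+2\rho_c\vert\eta)_c=(\la+2\rho_c\vert\la)_c.
\end{equation*}
Now Proposition~\ref{aux01} applies: the combined condition that $L^0(\eta)$ occurs in $\Lambda^k(\G_{-1})\otimes L(\G,\la)$ and has matching $c$-Casimir eigenvalue is equivalent to the existence of $w\in W^0_k$ with $\eta=w\circ\la$, in which case the multiplicity is one. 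Translating back via $\natural$ gives $\mu=(w\circ\la)^\natural$ with multiplicity one, yielding both directions of the claim.

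I do not expect a substantial obstacle: the content is packaged in the three preparatory results, and the only thing to be careful about is that the $\natural$ bijection is applied to elements of $\mc{X}_{m|\infty}$ (so that $\natural$ is defined and injective on the set of weights that actually appear), which is guaranteed by Remark~\ref{decomposition}. The multiplicity-one assertion is inherited directly from Proposition~\ref{aux01} through the multiplicity-preserving bijection of Lemma~\ref{mult:same}.
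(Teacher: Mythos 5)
Your proof is correct and follows essentially the same route as the paper's: both reduce to Proposition~\ref{aux01} by combining Remark~\ref{decomposition} (to place $\mu$ in the image of $\natural$), Lemma~\ref{mult:same} (to transfer component multiplicities), and Lemma~\ref{finitecasimir} (to transfer the Casimir-eigenvalue condition). The only cosmetic difference is that you present the argument as a single chain of equivalences, whereas the paper separates the forward and converse implications.
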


\begin{proof}
Let $L^0(\mu)$ be an irreducible $\SG_0$-module in
$\Lambda^k(\SG_{-1})\otimes {L}(\SG,\la^\natural)$. By
\remref{decomposition}, we have $\mu=\eta^\natural$ for some
$\eta\in \mc{X}_{m|\infty}$. By \lemref{mult:same}, $L^0(\eta)$ is
an irreducible component of $\Lambda^k({\mathfrak g}_{-1})\otimes
{L}(\G,\la)$ with the same multiplicity.  By Lemma
\ref{finitecasimir}, if $(\mu
+2\rho_s\vert\mu)_s=(\la^\natural+2\rho_s\vert\la^\natural)_s$, then
we have $(\eta +2\rho_c\vert\eta)_c=(\la+2\rho_c\vert\la)_c$.
Furthermore by \propref{aux01}, $\eta = w\circ \la$ for some $w\in
W^0_k$, and the multiplicity of $L^0(\mu)$ is one.

Conversely, if $\mu=(w\circ\la)^\natural$ for some $w\in W^0_k$,
then by \lemref{finitecasimir} we get
\begin{equation*}
(\mu+2\rho_s\vert\mu)_s=(\la^\natural+2\rho_s\vert\la^\natural)_s.
\end{equation*}
By \propref{mult:same}, $L^0(w\circ\la)$ appears in
$\Lambda^k(\G_{-1})\otimes L(\G,\la)$ with multiplicity one. Hence
by \lemref{mult:same} $L^0(\mu)$ also appears in
$\Lambda^k(\SG_{-1})\otimes L(\SG,\la^\natural)$ with multiplicity
one.
\end{proof}

\section{Weak BGG-type resolutions for irreducible tensor
$\gl(m|n)$-modules}

Since $\SG/\ov{\mf{p}}$ is a $\ov{\mf{p}}$-module,
$D_k:=U(\SG)\otimes_{U(\overline{\mathfrak
p})}\Lambda^k(\SG/\overline{\mathfrak p})$ is a $\SG$-module with
$\SG$ acting on the first factor, for $k\ge 0$. Define the sequence
\begin{equation}\label{Koszul}
\cdots\stackrel{\partial_{k+1}}{\longrightarrow}D_k\stackrel{\partial_k}
{\longrightarrow}D_{k-1}\stackrel{\partial_{k-1}}{\longrightarrow}\cdots
\stackrel{\partial_1}{\longrightarrow}D_0\stackrel{\epsilon}{\longrightarrow}
\C\longrightarrow 0,
\end{equation}
where ${\epsilon}$ is the augmentation map from $U(\SG)$ to $\C$ and
$${\partial_k}(a\otimes
\bar{x}_1\bar{x}_2\cdots\bar{x}_k):=\sum_{j=1}^k ax_j\otimes
\bar{x}_1\cdots \widehat{\bar{x}}_j\cdots\bar{x}_k,$$ for $a\in
U(\SG)$ and $x_i\in \SG$. Here $\bar{x}_j$ denotes the image of
$x_j$ in $\SG/\overline{\mathfrak p}$ under the natural map. One
easily checks that the ${\partial_k}$'s are well-defined
$U(\SG)$-maps and \eqnref{Koszul} is a chain complex. The exactness
of \eqnref{Koszul} follows, for example, from the exactness of the
dual of the Koszul complex \cite[Appendix D.13]{Ku} (see also
\cite{KK}).

For $\la\in\mc{P}_{m|n}$ and $k\geq 0$, $Y_k:=D_k\otimes
L(\SG,\la^\natural)$ is a $\SG$-module. Tensoring \eqnref{Koszul}
with $L(\SG,\la^\natural)$ we obtain an exact sequence \cite{GL, Ku,
J}
\begin{equation}\label{standardres}
\cdots\stackrel{d_{k+1}}{\longrightarrow}Y_k\stackrel{d_k}{\longrightarrow}Y_{k-1}\stackrel{d_{k-1}}{\longrightarrow}\cdots
\stackrel{d_1}{\longrightarrow}Y_0\stackrel{d_0}{\longrightarrow}
L(\SG,\la^\natural)\longrightarrow 0,
\end{equation}
where $d_k:=\partial_k\otimes 1$ for $k>0$ and $d_0:=\epsilon\otimes
1$.

Let $V$ be a $\SG$-module, on which the action of $\SG_{+1}$ is
locally nilpotent. We define
\begin{equation*}
V^c:=\{\,v\in V\,\vert\, (\overline{\Omega}-c)^lv=0\ \  \text{for
$l\gg 0$}\,\},
\end{equation*}
i.e.~$V^c$ is the generalized $\overline{\Omega}$-eigenspace
corresponding to the eigenvalue $c\in \mathbb{C}$.  Clearly we
have $V=\bigoplus_{c\in\C}V^c$. Put
$$c_{\la}=(\la^\natural+2\rho_s|\la^\natural)_s.$$
The restriction of {\rm \eqnref{standardres}} to the generalized
$c_{\la}$-eigenspace of $\overline{\Omega}$ produces a resolution of
$\SG$-modules
\begin{equation}\label{resolution:verma}
\cdots\stackrel{d_{k+1}}{\longrightarrow}Y^{c_{\la}}_k\stackrel{d_k}
{\longrightarrow}Y^{c_{\la}}_{k-1}\stackrel{d_{k-1}}{\longrightarrow}\cdots
\stackrel{d_1}{\longrightarrow}Y^{c_{\la}}_0\stackrel{d_0}
{\longrightarrow}L(\SG,\la^\natural)\longrightarrow 0.
\end{equation}

\begin{prop}\label{finite:resolution} Assume that $n=\infty$. For $\la\in\mc{P}_{m|\infty}$,
we have a resolution of $\SG$-modules of the form
\begin{equation*}
\cdots\stackrel{d_{k+1}}{\longrightarrow}Z_k\stackrel{d_k}{\longrightarrow}Z_{k-1}\stackrel{d_{k-1}}{\longrightarrow}\cdots
\stackrel{d_1}{\longrightarrow}Z_0\stackrel{d_0}{\longrightarrow}L(\SG,\la^\natural)\longrightarrow
0
\end{equation*}
such that each $Z_k$ has a Kac flag.
Furthermore, $Z_k\cong\bigoplus_{w\in W^0_k} V(\SG,(w\circ
\la)^\natural)$ as $\SG_{-1}+\SG_0$-modules.
\end{prop}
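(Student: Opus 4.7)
The plan is to take $Z_k:=Y_k^{c_\la}$, so that \eqnref{resolution:verma} already supplies the required exact sequence; the real work is to identify the structure of $Z_k$.

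First I would apply the tensor identity for parabolic induction,
\begin{equation*}
Y_k=D_k\otimes L(\SG,\la^\natural)\cong U(\SG)\otimes_{U(\ov{\mf p})}W_k,\qquad W_k:=\Lambda^k(\SG/\ov{\mf p})\otimes L(\SG,\la^\natural),
\end{equation*}
endowing $W_k$ with its diagonal $\ov{\mf p}$-action. Since $\SG_{+1}$ acts trivially on $\SG/\ov{\mf p}\cong\SG_{-1}$, the $\SG_{+1}$-action on $W_k$ factors through the $\SG_{+1}$-action on $L(\SG,\la^\natural)$ and is therefore locally nilpotent.

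Next I would construct a Kac flag of $Y_k$. By \remref{decomposition}, as a $\SG_0$-module $W_k=\bigoplus_\mu m_\mu L^0(\mu)$ with $\mu\in\mc X_{m|n}$. Filtering $W_k$ by the locally nilpotent $\SG_{+1}$-action and further by $\SG_0$-type yields a $\ov{\mf p}$-filtration whose successive quotients are the $L^0(\mu)$'s regarded as trivial extensions to $\ov{\mf p}$. Since $U(\SG)\otimes_{U(\ov{\mf p})}-$ is exact, this produces a Kac flag of $Y_k$ with subquotients $V(\SG,\mu)$, each appearing $m_\mu$ times. The Casimir $\ov{\Omega}$ acts on $V(\SG,\mu)$ by the scalar $(\mu+2\rho_s|\mu)_s$, so passing to the generalized $c_\la$-eigenspace gives a Kac flag of $Z_k$; by \propref{casimir:res} the surviving subquotients are exactly $V(\SG,(w\circ\la)^\natural)$ for $w\in W^0_k$, each with multiplicity one.

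For the $\SG_{-1}+\SG_0$-isomorphism, the PBW theorem supplies an identification $Y_k\cong U(\SG_{-1})\otimes W_k$ of $\SG_{-1}+\SG_0$-modules, which combined with the $\SG_0$-decomposition of $W_k$ gives $Y_k|_{\SG_{-1}+\SG_0}\cong\bigoplus_\mu m_\mu V(\SG,\mu)|_{\SG_{-1}+\SG_0}$, using that $V(\SG,\mu)|_{\SG_{-1}+\SG_0}\cong U(\SG_{-1})\otimes L^0(\mu)$. The $\SG_{-1}+\SG_0$-characters of $Z_k$ and $\bigoplus_w V(\SG,(w\circ\la)^\natural)$ then agree immediately from the Kac flag, and I would upgrade this character identity to a genuine module isomorphism by splitting the $\SG$-Kac flag of $Z_k$ at the $\SG_{-1}+\SG_0$-level: each restricted Kac module $V(\SG,\mu)|_{\SG_{-1}+\SG_0}\cong\mathrm{Ind}_{\SG_0}^{\SG_{-1}+\SG_0}L^0(\mu)$ is projective in the category of $\SG_{-1}+\SG_0$-modules on which $\SG_0$ acts semisimply, so by Frobenius reciprocity each step of the flag admits a $\SG_{-1}+\SG_0$-splitting, producing the required direct sum. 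The main obstacle is this last splitting step: care must be taken to work in a category ($\SG_0$-semisimple with suitable finiteness conditions) in which the relevant $\mathrm{Ext}^1$'s vanish, since at the level of the full $\SG$-action the eigenspace $Z_k$ need not decompose as a $\SG$-direct sum.
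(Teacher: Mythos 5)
Your proposal is correct and follows essentially the same route as the paper: the tensor identity $Y_k\cong U(\SG)\otimes_{U(\ov{\mf p})}\bigl(\Lambda^k(\SG/\ov{\mf p})\otimes L(\SG,\la^\natural)\bigr)$, a $\ov{\mf p}$-filtration of this fiber with subquotients $L^0(\mu)$ giving a Kac flag of $Y_k$, the scalar action of $\ov{\Omega}$ on each $V(\SG,\mu)$, and \propref{casimir:res} to single out the subquotients surviving in $Z_k=Y_k^{c_\la}$. The only difference is that you spell out, via projectivity of $\mathrm{Ind}_{\SG_0}^{\SG_{-1}+\SG_0}L^0(\mu)$ in the $\SG_0$-semisimple category, the splitting that upgrades the Kac flag of $Z_k$ to the asserted $\SG_{-1}+\SG_0$-direct sum, a point the paper passes over tacitly.
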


\begin{proof} Observe that $Y_k\cong U(\SG)\otimes_{U(\ov{\mf{p}})}\left(\Lambda^k(\SG/\ov{{\mf
p}})\otimes L(\SG,\la^\natural)\right)$. Suppose that as
$\SG_0$-module, we have $\Lambda^k(\SG/\ov{{\mf p}})\otimes
L(\SG,\la^\natural)\cong\bigoplus_{\mu\in\mc I} L^0(\mu)$ for some
multiset of weights $\mc{I}$. The $\ov{\mf{p}}$-module
$\Lambda^k(\SG/\ov{{\mf p}})\otimes L(\SG,\la^\natural)$ has a
composition series, where the multiset of composition factors is
precisely the multiset of $\ov{\mf{p}}$-module $L^0(\mu)$,
$\mu\in\mc{I}$. Thus $Y_k$ has a Kac flag and $Y_k\cong
\bigoplus_{\mu\in\mc I}V(\SG,\mu)$ as $\SG_{-1}+\SG_0$-modules. Now
$\ov{\Omega}$ acts on $V(\SG,\mu)$ as the scalar
$(\mu+2\rho_s\vert\mu)_s$, and hence
$Z_k=Y_k^{c_\la}\cong\bigoplus_{\mu}V(\SG,\mu)$, where the summation
is over all $\mu\in\mc I$ such that
$(\mu+2\rho_s\vert\mu)_s=(\la^\natural+2\rho_s\vert\la^\natural)_s$.
\propref{casimir:res} now says that this set is precisely
$\{\,(w\circ\la)^\natural\,\vert\, w\in W^0_k\,\}$.
\end{proof}

\begin{cor}\label{cor:resolution} Assume that $n\in\N$. For
$\la\in\mc{P}_{m|n}$, we have a resolution of $\SG$-modules of the
form
\begin{align*}
\cdots\stackrel{d_{k+1}}{\longrightarrow}Z_{k,n}\stackrel{d_k}{\longrightarrow}Z_{k-1,n}\stackrel{d_{k-1}}{\longrightarrow}\cdots
\stackrel{d_1}{\longrightarrow}Z_{0,n}\stackrel{d_0}{\longrightarrow}L(\SG,\la^\natural)\longrightarrow
0
\end{align*}
such that each $Z_{k,n}$ has a Kac flag. Furthermore, $Z_{k,n}\cong
\bigoplus_{w\in W_{k}^0}V(\SG,(w\circ\la)^\natural)$ as
$\SG_{-1}+\SG_0$-modules. Here, by definition we have
$V(\SG,\nu^\natural)=0$ for $\nu\in \mc{X}_{m|\infty}$ with
$\nu_{1}>n$.
\end{cor}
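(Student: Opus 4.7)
The approach is to deduce this from the $n=\infty$ case (\propref{finite:resolution}) via a truncation functor. Regarding $\la\in\mc{P}_{m|n}$ as a partition in $\mc{P}_{m|\infty}$ by padding with zeros, \propref{finite:resolution} applied to $\gl(m|\infty)$ produces an exact resolution
$$\cdots\to Z_k\to Z_{k-1}\to\cdots\to Z_0\to L(\gl(m|\infty),\la^\natural)\to 0,$$
where each $Z_k$ admits a Kac flag and $Z_k\cong\bigoplus_{w\in W^0_k}V(\gl(m|\infty),(w\circ\la)^\natural)$ as a module over the $\pm 1$ and $0$ graded parts of $\gl(m|\infty)$.

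Next, I introduce the truncation functor $T_n$ on weight $\gl(m|\infty)$-modules, defined by $T_n(M):=\bigoplus_{\mu\in\sh^*_n}M_\mu$, where $\sh^*_n\hookrightarrow\sh^*_\infty$ consists of weights $\mu$ with $\langle\mu,\ov{E}_{jj}\rangle=0$ for all $j>n$. Since every root of $\gl(m|n)\subset\gl(m|\infty)$ is supported in $I(m|n)$, $T_n(M)$ is $\gl(m|n)$-stable, and the functor is exact by construction. The technical heart of the argument is verifying that $T_n$ carries the building blocks of the $n=\infty$ resolution to those of the finite-$n$ resolution. First, $T_n(L(\gl(m|\infty),\la^\natural))\cong L(\gl(m|n),\la^\natural)$: both sides have character the hook Schur function indexed by $\la^\natural$ (\propref{hookschur}), and setting the positive-index variables $x_j=0$ for $j>n$ transforms one into the other; a standard highest-weight vector argument upgrades the character identity to an isomorphism. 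Second, for $\mu\in\sh^*_\infty$, one has $T_n(V(\gl(m|\infty),\mu))\cong V(\gl(m|n),\mu)$ if $\mu\in\sh^*_n$ and zero otherwise. Using the PBW decomposition $V(\gl(m|\infty),\mu)\cong\Lambda^\bullet(\gl(m|\infty)_{-1})\otimes L^0(\mu)$ as $\gl(m|\infty)_0$-module, weight vectors of weight in $\sh^*_n$ arise only from $\Lambda^\bullet(\gl(m|n)_{-1})\otimes T_n(L^0(\mu))$, since the roots of $\gl(m|\infty)_{-1}$ are of the form $\delta_a-\delta_b$ with $a>0$, $b<0$ and can only increase positive-index coordinates. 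If $\mu\in\sh^*_n$, then $T_n(L^0(\mu))$ is $L^0(\mu)$ viewed as a $\gl(m|n)_0$-module, giving $V(\gl(m|n),\mu)$; if instead some $\mu_j\ne 0$ for $j>n$, a dominance-order argument (using the non-negativity of Kostka multiplicities) shows that no weight of $L^0(\mu)$ can have support in $I(m|n)$, so the truncation vanishes.

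Applying $T_n$ term-by-term to the $n=\infty$ resolution then yields an exact resolution of $L(\gl(m|n),\la^\natural)$ by modules $Z_{k,n}:=T_n(Z_k)$. Since $T_n$ sends Kac modules either to Kac modules or to zero, the Kac flags of the $Z_k$ descend to Kac flags of the $Z_{k,n}$, and the required direct-sum description
$$Z_{k,n}\cong\bigoplus_{w\in W^0_k}V(\gl(m|n),(w\circ\la)^\natural)$$
holds, with the summands corresponding to $w$ satisfying $(w\circ\la)_1>n$ being zero in accordance with the stated convention. The main obstacle lies in the weight-support analysis underpinning the vanishing of $T_n$ on Kac modules whose highest weight falls outside $\sh^*_n$; granted this, the remainder of the proof is essentially bookkeeping about the exactness and naturality of $T_n$.
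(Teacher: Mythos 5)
Your proof takes exactly the same route as the paper's: apply a truncation functor $\mathfrak{tr}_n$ to the $n=\infty$ resolution of Proposition~\ref{finite:resolution}. The only difference is that the paper simply cites \cite[Definition~4.4, Corollary~4.6]{CWZ} for the definition of the truncation functor, its exactness, and its compatibility with irreducible and Kac modules, whereas you reconstruct these facts from scratch (correctly, modulo some compression in upgrading the weight-space identification $T_n(V(\gl(m|\infty),\mu))\cong V(\gl(m|n),\mu)$ from a $\SG_{-1}+\SG_0$-module statement to a genuine $\SG$-module isomorphism, which is needed for the Kac-flag claim).
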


\begin{proof}
The corollary follows from applying the truncation functor ${\mf{
tr}}_n$ \cite[Definition 4.4]{CWZ} upon the resolution in
\propref{finite:resolution} and using the facts that the truncation
functor is an exact functor and is compatible with both irreducible
and Kac modules \cite[Corollary 4.6]{CWZ}.
\end{proof}

\section{Strong BGG-type resolutions for irreducible tensor
$\gl(m|n)$-modules}\label{BGG:resolution}

For $n\in\N$ recall the definition of the super Bruhat ordering for
$\SG=\gl(m|n)$ on $\widetilde{\mc{X}}_{m|n}$ in \cite[\S 2-b]{B} ,
which we denote by $\preccurlyeq$. This gives a partial ordering on
$\widetilde{\mc{X}}_{m|n}$. We can restrict $\preccurlyeq$ to
$\mc{X}_{m|n}$, which can be defined for $\mc{X}_{m|\infty}$ as well
(cf.~\cite[Section 2.3]{CWZ}). Now we may also regard
$\widetilde{\mc{X}}_{m|n}$ as weights of $\G=\gl(m+n)$. In doing so
the usual Bruhat ordering of $\G$ determines a partial ordering
$\le$ on $\widetilde{\mc{X}}_{m|n}$ (see e.g.~\cite[Section
2.2]{CWZ}), which restricts to $\mc{X}_{m|n}$, and which in turn can
be defined for $\mc{X}_{m|\infty}$ as well. We have the following.

\begin{lem}\label{comp:bruhat} \cite[Lemma 6.6]{CWZ}
Let $\la,\mu\in{\mc{X}}_{m|\infty}$.  Then $\la\preccurlyeq\mu$ if
and only if $\la^\natural\le\mu^\natural$.
\end{lem}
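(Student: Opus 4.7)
My plan is to recall explicit combinatorial descriptions of both partial orderings and then verify their equivalence by matching up their covering relations. A subtlety worth flagging at the outset: classical dominance on partitions \emph{reverses} under conjugation, whereas the lemma asserts that $\preccurlyeq$ and $\le$ match in the same direction. This tells us that Brundan's super Bruhat order, restricted to $\mc{X}_{m|\infty}$, must already incorporate a sign-flip on the positive coordinates (coming from the $\ZZ_2$-grading of $\sh^*$ and the sign appearing in $(\cdot|\cdot)_s$), and the row-column flip built into $\natural$ is precisely what compensates for this.

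I would first unpack the definition of $\preccurlyeq$ from \cite[\S 2-b]{B}. After translating Brundan's weight-diagram language, $\preccurlyeq$ on $\widetilde{\mc{X}}_{m|\infty}$ becomes a dominance-type condition on the partial sums of the coordinates, where the positive coordinates are counted with the sign dictated by the parity appearing in $(\cdot|\cdot)_s$. In parallel, the ordinary Bruhat order $\le$ on $\gl(m+\infty)$-weights amounts to $\sum_{i\le k}\la^\natural_i \le \sum_{i\le k}\mu^\natural_i$ for every $k$, together with equality of total sums. The classical identity that interchanges row-dominance and column-dominance under partition conjugation then provides the bridge between the two conditions.

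The efficient execution reduces to covering relations: every $\preccurlyeq$-cover adds either an even simple root of $\gl(m|\infty)$ (moving a unit within the negative block or between two adjacent rows of the positive partition) or the odd simple root $\beta_{-1}=\delta_{-1}-\delta_1$ (linking the two blocks). The main obstacle will be handling this odd root. One must check that $\la\mapsto \la+\beta_{-1}$ translates under $\natural$ to a specific chain of even positive-root shifts of $\la^\natural$ in $\gl(m+\infty)$, namely a chain that transports one box from the bottom of the conjugated partition up through the columns to the coordinate at position $-1$; and one needs the equivalence, not merely the implication, in both directions. Once this odd-root step is pinned down, iterating the remaining (purely even) correspondence inside each block yields the full equivalence, since there the classical partition/conjugate-dominance interchange suffices.
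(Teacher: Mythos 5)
The paper does not prove this lemma; it is imported verbatim from \cite[Lemma 6.6]{CWZ}, so there is no in-paper argument against which to compare your attempt. Judged on its own, your proposal gets the conceptual starting point right: conjugation reverses dominance on partitions, so for $\preccurlyeq$ and $\le$ to align rather than anti-align under $\natural$, Brundan's super Bruhat order must already carry a sign flip on the positive (odd) coordinates, coming from the $\ZZ_2$-grading of $\sh^*$ and the form $(\cdot|\cdot)_s$. That observation is correct and is the reason the lemma is true at all.

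What you have, however, is a plan rather than a proof, and it contains one step that would not survive scrutiny as stated. You assert that every $\preccurlyeq$-cover adds a simple root of $\gl(m|\infty)$: an even $\beta_i$ inside one block, or the odd $\beta_{-1}$. Brundan's $\preccurlyeq$ is not defined as the order generated by simple-root additions; it is a combinatorial order on weight diagrams whose elementary moves swap $\vee$- and $\wedge$-labels that may be separated by several intervening positions, and the odd move subtracts a positive odd root $\delta_{-s}-\delta_t$ singled out by an atypicality condition on the $\rho$-shifted coordinates — this root need not be $\beta_{-1}$. Either you must prove that, on the restricted set $\mc{X}_{m|\infty}$, covers really do reduce to simple-root steps (nontrivial and not obviously true), or you should compare the two orders directly via their dominance-type descriptions on $\rho$-shifted coordinate strings, which is closer to how \cite{CWZ} actually argues and avoids covering relations entirely. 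Finally, the crucial translation of the odd step under $\natural$ into a chain of even $\gl(m+\infty)$ moves is announced as a task rather than carried out; since this is exactly where the content of the lemma sits, the argument as written remains incomplete.
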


In the remainder of this section we assume that $n\in\N$ unless
otherwise specified.

\begin{lem}\label{aux999} Let $n\in \N$ and $\la,\mu\in\widetilde{\mc{X}}_{m|n}$.
Suppose that $\mu\not\preccurlyeq\la$. Then
$${\rm Hom}_{\SG}(V(\SG,\mu),V(\SG,\la))=0.$$
\end{lem}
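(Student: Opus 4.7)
My plan is to use Frobenius reciprocity to convert a hypothetical nonzero homomorphism into the existence of a particular composition factor of $V(\SG,\la)$, and then invoke the characterization of composition factors of Kac modules via the super Bruhat ordering.

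First I would argue as follows. Suppose for contradiction that there is a nonzero $\SG$-homomorphism $\phi: V(\SG,\mu) \to V(\SG,\la)$. Since $V(\SG,\mu) = \text{Ind}_{\ov{\mf p}}^{\SG} L^0(\mu)$, Frobenius reciprocity shows that $\phi$ corresponds to a nonzero $\ov{\mf p}$-module homomorphism $L^0(\mu) \to V(\SG,\la)$. In particular the image of the highest weight vector of $L^0(\mu)$ is a nonzero vector $v \in V(\SG,\la)_{\mu}$ that is killed by $\SG_{+1}$ (because $\SG_{+1}$ acts trivially on $L^0(\mu)$) and also by the nilpotent radical of the Borel of $\SG_0$ (because it is a highest weight vector of $L^0(\mu)$). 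Thus $v$ is a $\ov{\mf b}$-singular vector of weight $\mu$ inside $V(\SG,\la)$.

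Next, the $\SG$-submodule $U(\SG)\cdot v$ of $V(\SG,\la)$ is a nonzero highest weight module of highest weight $\mu$, so its unique simple quotient is isomorphic to $L(\SG,\mu)$. Consequently $L(\SG,\mu)$ appears as a composition factor of $V(\SG,\la)$.

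The final step is to quote the fundamental fact from Brundan's work (which is precisely how the super Bruhat order $\preccurlyeq$ is designed in \cite[\S 2-b]{B}; see also the discussion in \cite{CWZ}): if $L(\SG,\nu)$ is a composition factor of the Kac module $V(\SG,\la)$, then $\nu \preccurlyeq \la$. Applied to $\nu = \mu$, this contradicts the standing hypothesis $\mu \not\preccurlyeq \la$, so no such $\phi$ can exist.

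The only real content is the composition-factor statement in the third paragraph; the rest is standard Frobenius reciprocity. The main obstacle, if one chose not to invoke \cite{B} as a black box, would be to re-derive that the highest weights of composition factors of $V(\SG,\la)$ lie below $\la$ in the super Bruhat order — but since the statement of \lemref{aux999} itself is formulated using $\preccurlyeq$, citing \cite{B} is the natural route.
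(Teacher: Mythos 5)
Your proof is correct and follows exactly the same approach as the paper: assume a nonzero homomorphism, deduce that $L(\SG,\mu)$ is a composition factor of $V(\SG,\la)$, and invoke Brundan's linkage for Kac modules to get $\mu\preccurlyeq\la$. The paper simply states the composition-factor implication without the Frobenius-reciprocity details you spell out, and cites \cite[Corollary 3.36(i)]{B} together with \cite[Theorem 4.37]{B} for the linkage statement.
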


\begin{proof}
Suppose that ${\rm Hom}_{\SG}(V(\SG,\mu),V(\SG,\la))\not=0$. Then
$L(\SG,\mu)$ is a composition factor of the Kac module $V(\SG,\la)$.
It follows from  \cite[Corollary 3.36 (i)]{B} and \cite[Theorem
4.37]{B} that $\mu\preccurlyeq\la$.
\end{proof}

\begin{lem}\label{nohomomorphism}
Let $n\in \N$ and $\mu\in \widetilde{\mc{X}}_{m|n}$. Suppose that
$M$ is a finite-dimensional $\SG$-module with a Kac flag
\begin{align*}
0=M_0\subseteq M_1\subseteq M_2\subseteq\cdots\subseteq M_l=M,
\end{align*}
and ${\rm Hom}_{\SG}(M_i/M_{i-1},V(\SG,\mu))=0$ for all
$i=1,\cdots,l$. Then
$${\rm Hom}_{\SG}(M,V(\SG,\mu))=0.$$
\end{lem}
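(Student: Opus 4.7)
The plan is to proceed by induction on the length $l$ of the Kac flag. The base case $l = 1$ is immediate: then $M = M_1 = M_1/M_0$, so the desired vanishing is exactly the hypothesis at $i = 1$.

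For the inductive step, I would apply the contravariant functor ${\rm Hom}_{\SG}(-,V(\SG,\mu))$ to the short exact sequence of $\SG$-modules
\begin{equation*}
0 \longrightarrow M_{l-1} \longrightarrow M \longrightarrow M/M_{l-1} \longrightarrow 0.
\end{equation*}
By the left exactness of ${\rm Hom}_{\SG}(-,V(\SG,\mu))$, this yields the exact sequence
\begin{equation*}
0 \to {\rm Hom}_{\SG}(M/M_{l-1},V(\SG,\mu)) \to {\rm Hom}_{\SG}(M,V(\SG,\mu)) \to {\rm Hom}_{\SG}(M_{l-1},V(\SG,\mu)).
\end{equation*}
The leftmost term vanishes by the hypothesis at $i = l$, since $M/M_{l-1} = M_l/M_{l-1}$. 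On the other hand, the filtration $0 = M_0 \subseteq M_1 \subseteq \cdots \subseteq M_{l-1}$ is a Kac flag of $M_{l-1}$ of length $l-1$ whose successive quotients coincide with those of the original flag, so the inductive hypothesis applies and forces the rightmost term to vanish as well. Exactness then forces the middle term to vanish, completing the induction.

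There is essentially no obstacle in this argument: it is a standard d\'evissage relying only on the left exactness of ${\rm Hom}_{\SG}(-,V(\SG,\mu))$, which holds in any abelian category. In particular, no appeal to Brundan's Kazhdan--Lusztig theory or to \lemref{aux999} is needed for this lemma itself; those inputs will instead feed into the verification that the hypotheses ${\rm Hom}_{\SG}(M_i/M_{i-1},V(\SG,\mu)) = 0$ actually hold when \lemref{nohomomorphism} is invoked in the subsequent construction of the strong BGG-type resolution.
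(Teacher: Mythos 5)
Your proof is correct and is essentially the paper's argument: induction on the length of the Kac flag combined with the left exactness of ${\rm Hom}_{\SG}(-,V(\SG,\mu))$ applied to a short exact sequence extracted from the flag. The only cosmetic difference is that you split off the top term via $0\rightarrow M_{l-1}\rightarrow M\rightarrow M/M_{l-1}\rightarrow 0$, whereas the paper splits off the bottom via $0\rightarrow M_1\rightarrow M\rightarrow M/M_1\rightarrow 0$; both dévissages work identically.
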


\begin{proof} Since $M$ is finite-dimensional we have
$M_i/M_{i-1}\cong V(\SG,\mu_i)$ with
$\mu_i\in\widetilde{\mc{X}}_{m|n}$ for all $i$. Consider the exact
sequence
$$0\rightarrow M_1\rightarrow M\rightarrow
M/M_1\rightarrow 0.$$ Noting that $M/M_1$ has a Kac flag of length
$l-1$, the lemma follows easily from the long exact sequence and
induction on $l$.
\end{proof}

\begin{lem}\label{noextension}
Let $n\in \N$ and $\la,\mu\in {\mc{X}}_{m|n}$. Suppose that $\la$
and $\mu$ are not comparable under the super Bruhat ordering. Then
$${\rm Ext}^1(V(\SG,\la),V(\SG,\mu))=0.$$
\end{lem}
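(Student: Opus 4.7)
The plan is to exploit the projective cover of $L(\SG,\la)$ in Brundan's highest weight structure on the category of finite-dimensional $\SG$-modules \cite{B}. Denote that projective cover by $P(\la)$. First I would invoke \cite{B} to produce a Kac flag of $P(\la)$ whose top layer is $V(\SG,\la)$, so that $P(\la)\twoheadrightarrow V(\SG,\la)$ and the kernel $K:=\ker(P(\la)\twoheadrightarrow V(\SG,\la))$ inherits a Kac flag whose successive quotients are of the form $V(\SG,\nu)$ with $\nu\neq\la$. By BGG reciprocity for this highest weight structure, a layer $V(\SG,\nu)$ can appear in the Kac flag of $P(\la)$ only if $[V(\SG,\nu):L(\SG,\la)]\neq 0$, which by \cite[Corollary 3.36(i), Theorem 4.37]{B} (the same input driving \lemref{aux999}) forces $\la\preccurlyeq\nu$. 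Hence every layer of the Kac flag of $K$ satisfies $\la\prec\nu$ strictly.

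Next I would apply ${\rm Hom}_{\SG}(-,V(\SG,\mu))$ to the short exact sequence $0\to K\to P(\la)\to V(\SG,\la)\to 0$. Since $P(\la)$ is projective, ${\rm Ext}^1(P(\la),V(\SG,\mu))=0$ and the long exact sequence collapses to a surjection
$$ {\rm Hom}_{\SG}(K,V(\SG,\mu))\twoheadrightarrow {\rm Ext}^1(V(\SG,\la),V(\SG,\mu)), $$
reducing the lemma to the vanishing of the source. By \lemref{nohomomorphism} this reduces in turn to showing ${\rm Hom}_{\SG}(V(\SG,\nu),V(\SG,\mu))=0$ for every layer $V(\SG,\nu)$ of $K$, and by \lemref{aux999} it suffices to have $\nu\not\preccurlyeq\mu$. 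Suppose, toward a contradiction, that some layer satisfied $\nu\preccurlyeq\mu$; combined with $\la\prec\nu$ from the previous paragraph, transitivity of the super Bruhat order would yield $\la\prec\mu$, contradicting the hypothesis that $\la$ and $\mu$ are incomparable.

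The main obstacle is the input on the projective cover, which is not developed in the excerpt: one needs from \cite{B} that (i) $P(\la)$ exists in the relevant category, (ii) it admits a Kac (standard) filtration with $V(\SG,\la)$ on top, and (iii) BGG reciprocity identifies the multiplicities in this flag with composition-factor multiplicities of Kac modules. Once these are granted, the remainder of the argument is a routine d\'evissage using only \lemref{aux999}, \lemref{nohomomorphism}, and transitivity of $\preccurlyeq$.
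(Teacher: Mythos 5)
Your proposal is correct and follows essentially the same route as the paper: both use the short exact sequence $0\to K\to P(\la)\to V(\SG,\la)\to 0$ with $P(\la)$ the projective cover (whose Kac flag and ordering of layers $\gamma\succ\la$ the paper takes from \cite[Proposition 2.5]{Z} and \cite[Theorem 4.37]{B}, in place of your appeal to BGG reciprocity), then the long exact sequence plus Lemmas \ref{aux999} and \ref{nohomomorphism} and transitivity of $\preccurlyeq$. The external inputs you flag as needed are exactly the ones the paper cites, so there is no gap.
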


\begin{proof}
Consider $P(\la)$ the projective cover (in the category of
finite-dimensional $\SG$-modules) of $L(\SG,\la)$. We have an exact
sequence
\begin{equation}\label{procover}
0\rightarrow K\rightarrow P(\la)\rightarrow V(\SG,\la)\rightarrow 0.
\end{equation}
Now $P(\la)$ has a Kac flag \cite[Proposition 2.5]{Z} and hence so
has $K$. By \cite[Theorem 4.37]{B}, $P(\la)$ is a tilting module and
if $V(\SG,\gamma)$ with $\gamma\not=\la$ appears in a Kac flag of
$P(\la)$, then $\gamma\in\widetilde{\mc{X}}_{m|n}$ and
$\gamma\succ\la$.

Now the induced long exact sequence from \eqnref{procover} gives
rise to the following exact sequence
$${\rm Hom}_{\SG}(K,V(\SG,\mu))\rightarrow {\rm Ext}^1(V(\SG,\la),V(\SG,\mu))\rightarrow 0.$$
Since all $V(\SG,\gamma)$ that appears in the Kac flag of $K$ are
such that $\gamma\succ\la$, we see that $\gamma\not\preccurlyeq\mu$
by hypothesis. Thus by Lemmas \ref{aux999} and \ref{nohomomorphism},
${\rm Hom}_{\SG}(K,V(\SG,\mu))=0$, and the lemma follows.
\end{proof}

\begin{thm}\label{true:BGG}
For $n\in \N$ and $\la\in\mc{P}_{m|n}$, we have a resolution of
$\SG$-modules of the form
\begin{align*}
\cdots\stackrel{d_{k+1}}{\longrightarrow}Z_{k,n}\stackrel{d_k}{\longrightarrow}Z_{k-1,n}\stackrel{d_{k-1}}{\longrightarrow}\cdots
\stackrel{d_1}{\longrightarrow}Z_{0,n}\stackrel{d_0}{\longrightarrow}L(\SG,\la^\natural)\rightarrow
0,
\end{align*}
where $Z_{k,n}\cong \bigoplus_{w\in
W_{k}^0}V(\SG,(w\circ\la)^\natural)$ as $\SG$-modules. As before, by
definition, we have $V(\SG,\nu^\natural)=0$ for $\nu\in
\mc{X}_{m|\infty}$ with $\nu_{1}>n$.
\end{thm}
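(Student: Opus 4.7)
The strategy is to upgrade the weak resolution of \corref{cor:resolution} --- in which each $Z_{k,n}$ carries a Kac flag with subquotients $V(\SG,(w\circ\la)^\natural)$ for $w\in W^0_k$, and in which the identification $Z_{k,n}\cong\bigoplus_{w\in W^0_k}V(\SG,(w\circ\la)^\natural)$ is known only at the level of $\SG_{-1}+\SG_0$-modules --- to a full $\SG$-module decomposition. Since the differentials $d_k$ are already $\SG$-linear and exactness has already been established, it suffices to prove that the Kac flag on each $Z_{k,n}$ splits as an $\SG$-module direct sum of its subquotients; the resolution claimed in \thmref{true:BGG} will then be precisely the resolution from \corref{cor:resolution}, only now recognized as $\SG$-linear throughout.

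The splitting will be proved by induction on the length of the Kac flag, using at each step the short exact sequence $0\to M'\to M\to V(\SG,(w\circ\la)^\natural)\to 0$ coming from the top of the filtration. The induction hinges on the vanishing $\mathrm{Ext}^1_{\SG}\bigl(V(\SG,(w\circ\la)^\natural),V(\SG,(w'\circ\la)^\natural)\bigr)=0$ for all distinct $w,w'\in W^0_k$. By \lemref{noextension}, this reduces to showing that the weights $(w\circ\la)^\natural$ and $(w'\circ\la)^\natural$ are incomparable under the super Bruhat order $\preccurlyeq$; and by \lemref{comp:bruhat}, that in turn is equivalent to $w\circ\la$ and $w'\circ\la$ being incomparable under the ordinary Bruhat order of $\G=\gl(m+n)$. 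The latter is a standard fact from the Lepowsky--Rocha-Caridi parabolic BGG theory: for $w\neq w'\in W^0$ of the same length, the dotted weights $w\circ\la$ and $w'\circ\la$ cannot be related in the root order, since $w\circ\la\le w'\circ\la$ would force a Bruhat relation $w'\le w$ whose antisymmetry would then collapse $w=w'$. Distinctness of the weights themselves is built into the multiplicity-one assertion of \propref{casimir:res}, so no self-extension terms arise in the induction.

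The main conceptual step --- and the only non-classical input --- is \lemref{comp:bruhat}, Brundan's translation between the super and ordinary Bruhat orders via $\natural$. Once that dictionary is in place, the rest is a standard Weyl-group incomparability statement together with a routine $\mathrm{Ext}^1$-vanishing/induction formalism, applied termwise to each $Z_{k,n}$. No new calculations are required; in particular, the differentials of \eqnref{resolution:verma} are unchanged, and only the internal direct-sum decomposition of each $Z_{k,n}$ is being strengthened from $\SG_{-1}+\SG_0$-linearity to full $\SG$-linearity.
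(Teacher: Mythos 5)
Your proposal is correct and follows essentially the same route as the paper's own proof: starting from the weak resolution of \corref{cor:resolution}, splitting each Kac flag by the $\mathrm{Ext}^1$-vanishing of \lemref{noextension}, and reducing incomparability under the super Bruhat order to the classical incomparability of $\{w\circ\la : w\in W^0_k\}$ via \lemref{comp:bruhat}. The one point the paper spells out and you elide is the rank bookkeeping: $W^0$ and \lemref{comp:bruhat} live at $n=\infty$, while \lemref{noextension} is a statement about $\gl(m|n)$ with $n$ finite, so one must observe that the finitely many relevant weights sit inside some $\mc{X}_{m|N}$ for $N\gg 0$ and that the Bruhat and super Bruhat orders are compatible under the embeddings $\mc{X}_{m|N}\hookrightarrow\mc{X}_{m|N+1}\hookrightarrow\cdots\hookrightarrow\mc{X}_{m|\infty}$ and the truncation maps; with that small addition your argument coincides with the paper's.
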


\begin{proof} We have a natural embedding of
$\mc{X}_{m|N}\stackrel{\iota_{N,N+1}}{\longrightarrow}
\mc{X}_{m|N+1}$ for any $N\in\N$. Also we have the truncation map
$\mc{X}_{m|N+1}\stackrel{\textsf{Tr}_{N+1,N}}{\longrightarrow}\mc{X}_{m|N}$
\cite[Section 6.6]{CWZ} that sends an element
$\la=(\la_{-m},\cdots,\la_{N+1})$ to
$\la=(\la_{-m},\cdots,\la_{N})$, if $\la_{N+1}=0$, and to
$\emptyset$, otherwise. The usual Bruhat orderings of $\mc{X}_{m|N}$
and $\mc{X}_{m|N+1}$ are compatible in the following sense:
\begin{itemize}
\item[(i)] For $\la,\mu\in\mc{X}_{m|N}$, one has $\la\le\mu$ if and
only if $\iota_{N,N+1}(\la)\le\iota_{N,N+1}(\mu)$.
\item[(ii)] For $\la,\mu\in\mc{X}_{m|N+1}$ with
$\textsf{Tr}_{N+1,N}(\la)\not=\emptyset,\textsf{Tr}_{N+1,N}(\mu)\not=\emptyset$,
one has $\la\le\mu$ if and only if
$\textsf{Tr}_{N+1,N}(\la)\le\textsf{Tr}_{N+1,N}(\mu)$.
\end{itemize}
Thus the Bruhat ordering of $\mc{X}_{m|N}$ is compatible with that
of $\mc{X}_{m|\infty}$.

We view $\la$ as a weight of $\gl(m+\infty)$ and so as an element in
$\mc{X}_{m|\infty}$. For a fixed $j\in\N$, it is not hard to see
that the weights $\{\,w\circ\la\,\vert\, w\in W^0_j\,\}$ form a
finite set and they all may be regarded as lying in the same
$\mc{X}_{m|N}$, for $N\gg 0$. Thus we may regard them all as weights
of $\gl(m+N)$ for some $N\gg 0$. But for such weights, it is
well-known from classical theory of semi-simple Lie algebras that
they are not comparable under the usual Bruhat ordering (see
e.g.~\cite[Lemma 1.3.16]{Ku}). Thus viewing them as weights of
$\gl(m+\infty)$, they are not comparable under the Bruhat ordering,
either. Hence, by \lemref{comp:bruhat}, the weights
$(w\circ\la)^\natural$ are not comparable under the super Bruhat
ordering of $\gl(m|\infty)$.  The theorem now follows from a similar
compatibility of the super Bruhat orderings of $\gl(m\vert\infty)$
and of $\gl(m|n)$, \lemref{noextension} and \corref{cor:resolution}.
\end{proof}

\begin{rem}
Note that $W$ above is the infinite Weyl group of $\gl(m+\infty)$,
even though we are considering the finite-dimensional Lie
superalgebra $\gl(m|n)$.
\end{rem}

\begin{rem}
\thmref{true:BGG} has the counterpart in the case of $n=\infty$ as
well.
\end{rem}

Recall that for $\la,\mu\in\widetilde{\mc{X}}_{m|n}$ with
$\la\succcurlyeq\mu$ there is a {\em relative length function}
defined in \cite[\S3-g]{B}, which we denote by $\ov{\ell}(\mu,\la)$.
Fix $\la\in\mc{P}_{m|n}$ so that
$\la^\natural\in\widetilde{\mc{X}}_{m|n}$. For
$\mu\in\widetilde{\mc{X}}_{m|n}$ with $\la^\natural\succcurlyeq\mu$
define an {\em absolute length function} by
\begin{equation*}
\ov{\ell}(\mu):=\ov{\ell}(\mu,\la^\natural).
\end{equation*}
We can now formulate \thmref{true:BGG} intrinsically without
referring to the infinite Weyl group of $\gl(m+\infty)$ as follows.

\begin{thm}\label{reformtrue:BGG}
For $n\in \N$ and $\la\in\mc{P}_{m|n}$, we have a resolution of
$\SG$-modules of the form
\begin{align*}
\cdots\stackrel{d_{k+1}}{\longrightarrow}Z_{k,n}\stackrel{d_k}{\longrightarrow}Z_{k-1,n}\stackrel{d_{k-1}}{\longrightarrow}\cdots
\stackrel{d_1}{\longrightarrow}Z_{0,n}\stackrel{d_0}{\longrightarrow}L(\SG,\la^\natural)\rightarrow
0,
\end{align*}
where $Z_{k,n}\cong \bigoplus_{\ov{\ell}(\mu)=k }V(\SG,\mu)$ as
$\SG$-modules.
\end{thm}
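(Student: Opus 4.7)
The strategy is to derive this theorem from \thmref{true:BGG} by re-indexing the direct summands of $Z_{k,n}$. Since the exactness and the Kac-flag structure of the resolution are already provided, all that remains is to show that the map $w\mapsto(w\circ\la)^\natural$ sets up a length-preserving bijection
\[
W^0_k \;\longleftrightarrow\; \{\mu\in\widetilde{\mc{X}}_{m|n} \,:\, \la^\natural\succcurlyeq\mu \text{ and } \ov{\ell}(\mu)=k\},
\]
where, as in \thmref{true:BGG}, those $w$ whose image has first part exceeding $n$ are discarded on the left.

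Injectivity of $w\mapsto(w\circ\la)^\natural$ on $W^0$ follows from the multiplicity-one clause of \propref{aux01} combined with the bijectivity of $\natural$ on $\mc{X}_{m|\infty}$. That the image lies in the super Bruhat interval below $\la^\natural$ is a direct consequence of \lemref{comp:bruhat} together with the classical inequality $w\circ\la\le\la$ in the Bruhat order of $\gl(m+\infty)$-weights, which holds since $\la$ is dominant integral for $\gl(m+\infty)$.

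The principal obstacle, and essentially the only step requiring genuine work, is matching the two grading functions:
\[
l(w) \;=\; \ov{\ell}\bigl((w\circ\la)^\natural,\,\la^\natural\bigr)\qquad (w\in W^0).
\]
I would prove this by induction on $l(w)$, the base case $w=e$ being trivial since both sides vanish. For the inductive step, a reduced decomposition $w=\sigma_j w'$ gives a length-one Bruhat cover $w\circ\la\lessdot w'\circ\la$ in $\mc{X}_{m|\infty}$ which, via the $\natural$-map and \lemref{comp:bruhat}, translates into a single cover relation in the super Bruhat order of $\widetilde{\mc{X}}_{m|n}$; by Brundan's definition in \cite[\S3-g]{B}, the relative length function $\ov{\ell}$ is built so as to increment by exactly one under such cover relations. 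The delicate point is to unpack Brundan's combinatorial description carefully enough to verify this single-step comparison cleanly. Once this identity is established, surjectivity of the map onto $\{\mu\,:\,\la^\natural\succcurlyeq\mu,\ \ov{\ell}(\mu)=k\}$ follows automatically (any such $\mu$ sits at super-Bruhat distance $k$ below $\la^\natural$, hence corresponds to some $w\in W^0_k$), and the theorem drops out of \thmref{true:BGG} by relabeling the summands of $Z_{k,n}$ via this bijection.
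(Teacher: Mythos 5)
Your proposal takes a genuinely different route from the paper. The paper's proof is short and goes through Brundan's Kazhdan--Lusztig theory: it invokes the cohomological interpretation of the polynomials $l_{\mu\nu}(q)$ (via \cite[Theorem 4.51]{B} and \cite[Theorem 5.1]{Z}), computes them explicitly from the Kostant-type $\SG_{+1}$-cohomology formula of \cite[Corollary 4.14]{CZ1}, and then reads off from \cite[Corollary 3.45]{B} that nonvanishing of $l_{\mu\la^\natural}$ occurs precisely in degree $\ov{\ell}(\mu)$. Both directions of the desired equivalence (``$\mu=(w\circ\la)^\natural$ with $w\in W^0_k$'' $\Leftrightarrow$ ``$\ov{\ell}(\mu)=k$'') drop out simultaneously from a single formula. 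Your proposal instead tries to establish a length-preserving bijection directly by Bruhat-order combinatorics, bypassing KL theory entirely.

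There are two gaps in your argument. First, the surjectivity claim is \emph{not} automatic. You assert that any $\mu$ with $\la^\natural\succcurlyeq\mu$ and $\ov{\ell}(\mu)=k$ ``corresponds to some $w\in W^0_k$,'' but this is exactly the nontrivial point: one must show that \emph{every} $\mu\in\widetilde{\mc{X}}_{m|n}$ lying below $\la^\natural$ in the super Bruhat order actually arises as $(w\circ\la)^\natural$ for some $w\in W^0$. This requires, among other things, knowing that such $\mu$ lands in $\mc{X}_{m|n}$ (so that $\natural$ and \lemref{comp:bruhat} apply at all), and then that the classical Bruhat interval below $\la$ inside $\mc{X}_{m|\infty}$ is exhausted by the dot-orbit $\{w\circ\la : w\in W^0\}$. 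None of this is stated or proved in your outline; in the paper it is precisely the strength of the cohomological KL formula (the polynomial \emph{vanishes} unless $\mu$ has this form) that makes surjectivity immediate. Second, the core identity $l(w)=\ov{\ell}((w\circ\la)^\natural,\la^\natural)$ is only sketched: you acknowledge that ``unpacking Brundan's combinatorial description'' is the delicate step, but you supply no verification that a length-one Bruhat cover for $\gl(m+\infty)$ weights transports through $\natural$ to a relation that increments $\ov{\ell}$ by exactly one. Brundan's $\ov{\ell}$ is defined by an explicit formula in \cite[\S3-g]{B}, not as a poset grading, so this compatibility is a real assertion requiring proof; the paper gets it for free from \cite[Corollary 3.45]{B}, which directly ties the degree of $l_{\mu\la^\natural}$ to $\ov{\ell}(\mu,\la^\natural)$. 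As written, your proposal therefore omits the two facts that actually carry the weight of the theorem.
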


\begin{proof}
For $\nu,\mu\in\widetilde{\mc{X}}_{m|n}$ recall Brundan's
Kazhdan-Lusztig polynomials $l_{\mu\nu}(q)$ of \cite[(2.18)]{B}. By
\cite[Theorem 4.51]{B} and \cite[Theorem 5.1]{Z} we have the
following cohomological interpretation:
\begin{equation*}
l_{\mu\nu}(-q^{-1})=\sum_{i=0}^\infty
\text{dim}\left[\text{Hom}_{\SG_0}\left(L^0(\mu),{\rm
H}^i\left(\SG_{+1};L(\SG,\nu)\right)\right)\right]q^i.
\end{equation*}
The calculation of the $\SG_{+1}$-cohomology groups in
\cite[Corollary 4.14]{CZ1} now implies that
\begin{equation*}
l_{\mu\la^\natural}(-q^{-1})=
\begin{cases}
q^k,\quad\text{if there exists }w\in W^0_k\text{ with
}\mu=(w\circ\la)^\natural\text{ and }(w\circ\la)_1\le n,\\
0,\quad\text{otherwise}.
\end{cases}
\end{equation*}
From \cite[Corollary 3.45]{B} we conclude that for such $\mu$ we
have $k=\ov{\ell}(\mu)$. On the other hand if
$\mu\in\widetilde{\mc{X}}_{m|n}$ with $\ov{\ell}(\mu)=k$, then
\cite[Corollary 3.45]{B} implies that
$l_{\mu\la^\natural}(-q^{-1})\not=0$ and hence $\mu$ is of the
form $(w\circ\la)^\natural$ with $w\in W^0_k$. Thus for
$\mu\in\widetilde{\mc{X}}_{m|n}$ the condition that there exists
$w\in W^0_k$ with $\mu=(w\circ\la)^\natural$ is equivalent to the
condition that $\ov{\ell}(\mu)=k$.  This together with
\thmref{true:BGG} completes the proof.
\end{proof}

We record the following corollary of the proof of
\thmref{reformtrue:BGG}.

\begin{cor}
Let $\la\in\mc{P}_{m|n}$.  As a $\SG_0$-module we have, for all
$k\in\ZZ_+$,
\begin{equation*}
{\rm
H}^k\left(\SG_{+1};L(\SG,\la^\natural)\right)\cong\bigoplus_{\ov{\ell}(\mu)=k}L^0(\mu).
\end{equation*}
\end{cor}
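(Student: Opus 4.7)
The plan is to read the Corollary off directly from the coefficient-by-coefficient comparison hidden inside the proof of \thmref{reformtrue:BGG}. Two ingredients from that proof do essentially all the work: Brundan's cohomological formula
\begin{equation*}
l_{\mu\la^\natural}(-q^{-1})=\sum_{i=0}^\infty
\dim\left[\text{Hom}_{\SG_0}\left(L^0(\mu),{\rm H}^i\left(\SG_{+1};L(\SG,\la^\natural)\right)\right)\right]q^i,
\end{equation*}
and the explicit evaluation established in the course of proving \thmref{reformtrue:BGG}, namely that $l_{\mu\la^\natural}(-q^{-1})=q^k$ precisely when $\mu=(w\circ\la)^\natural$ for some $w\in W^0_k$ with $(w\circ\la)_1\le n$, and vanishes otherwise.

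First I would recall, again from that proof (via \cite[Corollary~3.45]{B}), the key identification that for $\mu\in\widetilde{\mc{X}}_{m|n}$, the existence of $w\in W^0_k$ with $\mu=(w\circ\la)^\natural$ and $(w\circ\la)_1\le n$ is equivalent to the condition $\ov{\ell}(\mu)=k$. So the evaluation of $l_{\mu\la^\natural}(-q^{-1})$ can be rephrased as: it equals $q^{\ov{\ell}(\mu)}$ when $\la^\natural\succcurlyeq\mu$ in $\widetilde{\mc{X}}_{m|n}$, and is zero otherwise.

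Next I would compare coefficients of $q^k$ on both sides of Brundan's formula. For $\mu$ with $\ov{\ell}(\mu)=k$, this forces $\dim\text{Hom}_{\SG_0}(L^0(\mu),{\rm H}^k(\SG_{+1};L(\SG,\la^\natural)))=1$, while the corresponding multiplicities in all other cohomological degrees vanish; for $\mu$ with $\ov{\ell}(\mu)\neq k$ (or not lying below $\la^\natural$), the multiplicity of $L^0(\mu)$ in ${\rm H}^k(\SG_{+1};L(\SG,\la^\natural))$ is zero. Since $\SG_{+1}$ is abelian and $L(\SG,\la^\natural)$ is finite dimensional, ${\rm H}^k(\SG_{+1};L(\SG,\la^\natural))$ is a finite-dimensional $\SG_0$-module, and reductivity of $\SG_0\cong\gl(m)\oplus\gl(n)$ guarantees that it is semisimple, so these $\text{Hom}$-dimensions genuinely record its isotypic multiplicities. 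Assembling these multiplicities yields
\begin{equation*}
{\rm H}^k\left(\SG_{+1};L(\SG,\la^\natural)\right)\cong\bigoplus_{\ov{\ell}(\mu)=k}L^0(\mu),
\end{equation*}
as claimed. There is no genuine obstacle here: the Corollary is purely an extraction of coefficients from the polynomial identity already obtained inside the proof of \thmref{reformtrue:BGG}, and the only thing to double-check is the semisimplicity of the cohomology, which is automatic from reductivity of $\SG_0$.
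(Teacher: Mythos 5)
Your proposal is correct and follows essentially the same route as the paper, which records this corollary precisely as the coefficient-by-coefficient extraction from Brundan's formula $l_{\mu\la^\natural}(-q^{-1})=\sum_i \dim\mathrm{Hom}_{\SG_0}\bigl(L^0(\mu),{\rm H}^i(\SG_{+1};L(\SG,\la^\natural))\bigr)q^i$ together with the evaluation $l_{\mu\la^\natural}(-q^{-1})=q^{\ov{\ell}(\mu)}$ established in the proof of \thmref{reformtrue:BGG}. The only cosmetic point is that semisimplicity of the cohomology as a $\SG_0$-module is best justified by noting it is a subquotient of $\Lambda^k(\SG_{+1}^*)\otimes L(\SG,\la^\natural)$, on which the center of $\SG_0$ acts semisimply, rather than by reductivity alone.
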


We conclude with an example, which shows that finite-dimensional
irreducible representations over a simple Lie superalgebra cannot be
resolved in terms of direct sums Verma modules in general.

\begin{example}\label{example} Let $\la\in\sh^*$ and let $\C_\la$
denote the one-dimensional $\sh$-module that transforms by $\la$. We
extend $\C_\la$ trivially to a $\ov{\mf{b}}$-module and denote by
$M(\SG,\la)={\rm Ind}_{\ov{\mf{b}}}^{\SG}\C_{\la}$ the Verma module
of highest weight $\la$. Suppose that $L(\SG,\la)$ can be resolved
in terms of Verma modules. Then we have an exact sequence of
$\SG$-modules of the form
\begin{align*}
\cdots{\longrightarrow}\bigoplus_{i\in I} M(\SG,\mu_i)
\stackrel{\psi}{\longrightarrow}M(\SG,\la)\stackrel{\phi}{\longrightarrow}
L(\SG,\la)\longrightarrow 0,
\end{align*}
and ${\rm Hom}_{\SG}(M(\SG,\mu_i),M(\SG,\la))\not=0$, for all $i\in
I$. It follows that there exist singular vectors $v_{i}$ of weight
$\mu_i$ in $M(\SG,\la)$. ${\rm Im}\psi={\rm Ker}\phi$ implies that
the unique maximal submodule of $M(\SG,\la)$ must be generated by
the proper singular vectors of $M(\SG,\la)$.

Now consider $\la=\delta_{-1}$ and $\SG=\mf{gl}(1|2)$ or
$\SG=\mf{sl}(1|2)$. In the sequel we will suppress $\SG$. One can
show by a direct calculation that the only proper singular vectors
in the Verma module $M(\delta_{-1})$ are scalar multiples of either
$\ov{E}_{2,1}v$ or $\ov{E}_{1,-1}\ov{E}_{2,1}v$, where $v$ is a
highest weight vector of $M(\delta_{-1})$. If $M_1$ is the submodule
of $M(\delta_{-1})$ generated by $\ov{E}_{2,1}v$, then $M_1$ is the
submodule generated by all proper singular vectors of
$M(\delta_{-1})$. But ${\rm dim}\left(M(\delta_{-1})/M_1\right)=4$
by the PBW Theorem and, since ${\rm dim} L(\delta_{-1})=3$, it
follows that $M_1$ cannot be the unique maximal submodule of
$M(\delta_{-1})$. Thus $L(\delta_{-1})$ cannot have a resolution in
terms of Verma modules. We note that $M(\delta_{-1})/M_1$ is
isomorphic to the Kac module of highest weight $\delta_{-1}$ and
$\ov{E}_{1,-1}\ov{E}_{2,-1}v$ is a singular vector in
$M(\delta_{-1})/M_1$.
\end{example}

\thmref{true:BGG},  the fact that ${\rm
H}^1\left(\SG_{+1};L(\SG,\la^\natural)\right)$ is irreducible, and
the discussion in \exref{example} imply the following.

\begin{cor}
Let $n\in \N$ and $\la\in\mc{P}_{m|n}$. The unique maximal
submodule of a reducible $V(\SG,\la^\natural)$ is generated by the
proper singular vector of $V(\SG,\la^\natural)$.
\end{cor}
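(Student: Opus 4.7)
The plan is to read the maximal submodule directly off the tail of the resolution provided by \thmref{true:BGG}. Truncating that resolution at the first two positions gives an exact sequence
$$Z_{1,n}\xrightarrow{d_1}Z_{0,n}\xrightarrow{d_0}L(\SG,\la^\natural)\longrightarrow 0.$$
Since $W^0_0=\{e\}$ and $e\circ\la=\la$, the term $Z_{0,n}$ is exactly $V(\SG,\la^\natural)$ and $d_0$ is its canonical quotient map onto the unique irreducible quotient. Because $V(\SG,\la^\natural)$ has a unique maximal submodule $N$, exactness identifies $N={\rm Ker}\,d_0={\rm Im}\,d_1$.

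Next I would unwind ${\rm Im}\,d_1$ following the template of \exref{example}. Each summand of $Z_{1,n}=\bigoplus_{w\in W^0_1}V(\SG,(w\circ\la)^\natural)$ is a cyclic $\SG$-module generated by its highest weight vector $v_w$, so ${\rm Im}\,d_1$ is generated as a $\SG$-module by the family $\{d_1(v_w)\}_w$. Each $d_1(v_w)$ is a $\ov{\mf b}$-highest weight vector in $V(\SG,\la^\natural)$ whose weight $(w\circ\la)^\natural$ differs from $\la^\natural$, i.e.\ a proper singular vector of $V(\SG,\la^\natural)$.

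It remains to see that only one $w$ actually contributes. Here I would invoke the corollary preceding the statement, which identifies
$${\rm H}^1\bigl(\SG_{+1};L(\SG,\la^\natural)\bigr)\cong\bigoplus_{\ov{\ell}(\mu)=1}L^0(\mu).$$
Reducibility of $V(\SG,\la^\natural)$ forces $N\ne 0$, hence $Z_{1,n}\ne 0$ and the left-hand side is non-zero; the asserted irreducibility of ${\rm H}^1$ then forces exactly one summand on the right, i.e.\ exactly one admissible $w\in W^0_1$. Consequently $N$ is generated, as a $\SG$-module, by the single proper singular vector $d_1(v_w)$, which by construction exhausts (up to scalar) the proper singular vectors of $V(\SG,\la^\natural)$.

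The only non-routine ingredient is the irreducibility of ${\rm H}^1(\SG_{+1};L(\SG,\la^\natural))$ used in the last step; granted that, the rest is a direct unfolding of the first two terms of the weak-BGG resolution together with the standard fact that a Kac module is cyclic, generated by its highest weight vector. One small bookkeeping point to verify is that reducibility of $V(\SG,\la^\natural)$ really does force $Z_{1,n}\ne 0$ (equivalently, ${\rm H}^1\ne 0$), so that the irreducibility input yields one summand rather than the vacuous conclusion of zero summands.
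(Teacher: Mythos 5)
Your argument is correct and follows essentially the same route as the paper, whose one-line proof derives the corollary from exactly the three ingredients you unfold: the strong resolution of \thmref{true:BGG} (so the maximal submodule is ${\rm Im}\,d_1$, generated by images of the highest weight vectors of the Kac summands of $Z_{1,n}$, which are proper singular vectors, as in the discussion of \exref{example}), and the irreducibility of ${\rm H}^1\bigl(\SG_{+1};L(\SG,\la^\natural)\bigr)$, which reduces $Z_{1,n}$ to a single summand. The only caveat is your closing assertion that $d_1(v_w)$ exhausts the proper singular vectors of $V(\SG,\la^\natural)$ up to scalar: the construction shows $d_1(v_w)$ \emph{is} a proper singular vector generating the maximal submodule, but not that no others exist (this uniqueness is presupposed by the wording of the corollary rather than proved, in the paper as well as in your write-up), so it is best stated as an appeal to that known fact rather than as a consequence "by construction."
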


\bigskip
\frenchspacing

\end{document}